\documentclass[11pt,leqno]{amsart}
\usepackage{amscd}
\usepackage{color}
\usepackage[symbol]{footmisc}
\usepackage{amssymb}
\usepackage{amsfonts}
\usepackage{latexsym}
\usepackage{verbatim}
\usepackage{bbm}
\usepackage[shortlabels]{enumitem}
\usepackage[backref=page]{hyperref}
\usepackage{tikz-cd}
\renewcommand*{\backref}[1]{}
\renewcommand*{\backrefalt}[4]{%
	\ifcase #1 (Not cited).%
	\or        (Cited on page~#2).%
	\else      (Cited on pages~#2).%
	\fi}
\hypersetup{colorlinks=true,linkcolor=blue,citecolor=blue,urlcolor=black}

\newcommand{\Z}{\mathbb{Z}}

\newcommand{\R}{\mathbb{R}}
\newcommand{\C}{\mathbb{C}}
\newcommand{\ad}{\operatorname{ad}}

\newcommand{\g}{\mathfrak{g}}

\newcommand{\msf}[1]{\mathsf{#1}}
\newcommand*{\f}[1]{\mathfrak{#1}}
\renewcommand{\epsilon}{\varepsilon}
\renewcommand{\phi}{\varphi}

\theoremstyle{plain}
\newtheorem{thm}{Theorem}[section]
\newtheorem{prop}[thm]{Proposition}
\newtheorem{lem}[thm]{Lemma}
\newtheorem{cor}[thm]{Corollary}

\theoremstyle{definition}

\newtheorem{rmk}[thm]{Remark}

\title{Long-time existence of the Pluriclosed flow on some fibrations}
\author{Elia Fusi, James Stanfield, and Luigi Vezzoni}

\address[Elia Fusi, Luigi Vezzoni]{Dipartimento di Matematica ``Giuseppe Peano'', Universit\`a di Torino, Via Carlo Alberto 10, 10123 Torino, Italy}
\email{elia.fusi@unito.it}
\email{luigi.vezzoni@unito.it}
\address[James Stanfield]{School of Mathematics and Physics, University of Wollongong,  Northfields Ave,  Wollongong, NSW 2522,  Australia}
\email{jstanfield@uow.edu.au}
\subjclass[2020]{53C55, 53E30}

\begin{document}

\begin{abstract} We prove long-time existence of the pluriclosed flow on certain compact quotients of Lie groups for non-invariant initial data, as well as on some holomorphic principal torus bundles over  nonpositively curved K\"ahler manifolds.
In particular, our results cover the cases of nilmanifolds and almost-abelian solvmanifolds, and provide a new proof of the long-time existence of the pluriclosed flow on certain complex surfaces, originally established by Garcia-Fernandez, Jordan, and Streets.
These results follow from a general theorem on holomorphic submersions, which is of independent interest and, in particular, also implies the long-time existence of the pluriclosed flow on Oeljeklaus–Toma manifolds, as proved by Streets and Wang.
\end{abstract}

\maketitle

\section{Introduction}
{\em Pluriclosed metrics} were introduced by Bismut in \cite{Bis89} as Hermitian metrics $g$ on  a complex manifold $M$ whose K\"ahler form $\omega$ satisfies 
$$
\partial \bar \partial \omega=0\,. 
$$
  The study of properties of pluriclosed manifolds and the search for examples of such manifolds  were extremely fruitful throughout the last  years.  For instance, in view of \cite{G277},  any compact complex surface admits a pluriclosed metric.  Other examples of pluriclosed metrics were constructed in the locally homogeneous case, see  \cite{BF24,FP22, FPS04, LM25, MS11}, and  on toric fibrations, see \cite{FGV19,GGP08, Swa10}. Further examples can be constructed by blowing-up a compact pluriclosed manifold, see \cite{FinT09}. 
 
 In \cite{ST10},  Streets and Tian proposed the \emph{pluriclosed flow}  as a tool for the study of pluriclosed metrics. Such geometric flow evolves an initial pluriclosed metric $g_0$ according to the following equation: 
$$
\frac{\partial}{\partial t }g_t=-{\rm Ric}^{(2)}_{g_t}+Q^1_{g_t}\,,
$$
where ${\rm Ric}^{(2)}_{g_t}$ is the \emph{second Chern-Ricci tensor} of $g_t$ and $Q^1_{g_t}$ is a  precise quantity  which is quadratic in the  torsion of the Chern connection of $g_t$. It was shown that the fundamental form $\omega_t$ associated to  $g_t$ evolves according to  
$$
\frac{\partial}{\partial t }\omega_t=-\rho_B^{1,1}\,,
$$
where $\rho_B$ is the  \emph{first Bismut-Ricci form} of $g_t$ defined by $\rho_B:=\frac12{\rm tr}(JR^B)$, and $R^B$ is the curvature tensor of the  Bismut connection of $g_t$. As shown in \cite{
ST10},  the pluriclosed flow is always well-posed on compact manifolds  and it belongs to the family of the \emph{Hermitian curvature flows}, see \cite{ST11}. In analogy to what is known for the K\"ahler-Ricci flow, in \cite{ST13}, it was conjectured that the solution of the pluriclosed flow  exists in a maximal interval $[0,\tau(g_0))$, where 
$$
\tau(g_0):=\sup\,\{t>0\,\,|\,\,[\omega_0]_{{\rm A}}-tc_1(M)>0\}\,.
$$
Here,  $[\omega_0]_{{\rm A}}:=\{\omega_0+\partial \bar\alpha+\bar \partial \alpha\,\,|\,\, \alpha \in \Lambda^{1,0}(M)\}$ is the Aeppli cohomology class of $\omega_0$, 
and $c_1(M)$ is regarded as a class in Aeppli cohomology. In particular, according to the above conjecture,  it is  expected that the solution of the pluriclosed flow  on manifolds with $c_1(M)\le 0$ is immortal,   for every initial starting metric. This was confirmed so far in the following cases:
\begin{itemize}
\item manifolds with a Hermitian metric with nonpositive holomorphic bisectional curvature~\cite{Str16};

\vspace{0.1cm}
\item manifolds with certain globally generated bundles \cite{Str216};

\vspace{0.1cm}
\item Bismut-flat manifolds \cite{B23,GFJS23};

\vspace{0.1cm}
\item minimal non-K\"ahler  compact surfaces of Kodaira dimension $\kappa\geq 0$ \cite{GFJS23};

\vspace{0.1cm}
\item Oeljeklaus-Toma solvmanifolds \cite{SW25};
\end{itemize}
while it was confirmed on arbitrary solvmanifolds only for invariant initial metrics \cite{AL19,Bol16,EFV15,FLS24,FV24}. One of the main  purposes of the present paper is to show the long-time existence of the pluriclosed flow on some quotients of Lie groups for non-invariant initial data:

\begin{thm}\label{main3}
Let $M=\msf G / \Gamma$ be the compact quotient of a Lie group by a lattice endowed with a left-invariant complex structure. Assume that $\msf G$ satisfies one of the following assumptions: 
\begin{enumerate}
\item[$1.$] $\msf G$ is nilpotent;

\vspace{0.1cm}
\item[$2.$] $\msf G$ is almost-abelian;

\vspace{0.1cm}
\item[$3.$] $\msf G$ is solvable and the nilradical of its Lie algebra is of codimension $2$, abelian, and invariant under the complex structure.
\end{enumerate}
Then the pluriclosed flow on $M$ has a long-time solution for every initial pluriclosed metric. 
\end{thm}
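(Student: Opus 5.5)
The plan is to derive Theorem \ref{main3} from the general result on holomorphic submersions announced in the abstract. We realise each $M$ in cases 1--3 as the total space of a holomorphic submersion (in fact a holomorphic principal torus bundle, or a tower of such) over a base on which the pluriclosed flow is controlled, and then check the hypotheses of that theorem. Concretely, the general theorem should say the following. Let $\pi\colon M\to B$ be a holomorphic submersion with compact fibres, where the base carries a Kähler metric of nonpositive bisectional curvature (e.g.\ a flat complex torus) and the vertical bundle is holomorphically trivialised by globally defined holomorphic vector fields. Then any solution of the pluriclosed flow on $M$ satisfies uniform $C^0$ bounds on every finite time interval, namely $g_t\le C(T)\,g_0$ and $\det g_t\ge c(T)\det g_0$. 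By the Streets--Tian/Jordan--Streets regularity theory for pluriclosed flow (upper and lower metric bounds imply higher-order bounds), the flow then extends past any finite time.

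\textbf{Case 1: $\msf G$ nilpotent.} Take the last nonzero term $\f z$ of the descending central series of $\g$, or more conveniently a $J$-invariant central ideal. For a left-invariant complex structure on a nilpotent Lie algebra, Salamon's theorem gives a $J$-invariant ascending series. In particular $\f a_1=\f z\cap J\f z\neq 0$ is a nonzero complex central ideal, and it is rational with respect to $\Gamma$ after a standard argument, by Malcev rationality of the center. Then $M\to M'=\msf G'/\Gamma'$ with $\msf G'=\msf G/\exp\f a_1$ is a holomorphic principal torus bundle. Its vertical fields are left-invariant (hence global) holomorphic vector fields, since $\f a_1$ is central and $J$-invariant. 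Iterating along the series, we get a tower ending at a complex torus. Two routes are possible: apply the submersion theorem to the composite $M\to T$ with vertical bundle generated by global holomorphic fields coming from the $J$-invariant flag, or argue inductively that the bounds on the base level suffice.

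\textbf{Cases 2 and 3 (solvable).} For $\g=\R\ltimes_A\R^{2n-1}$ almost-abelian, the codimension-one nilradical $\f n$ contains $J$-invariant $\f n\cap J\f n$ of real codimension $2$ in $\g$, which is abelian and an ideal. Projecting along it gives a holomorphic submersion onto a complex $1$-dimensional quotient, which is an elliptic curve, or a torus after passing to the compact quotient. The fibres are tori, and the fibre directions are spanned by left-invariant $(1,0)$-fields coming from the abelian ideal. In case 3 the same structure is assumed directly: $\f n$ is abelian, $J$-invariant, of codimension $2$, so $\g/\f n\cong\C$ and $M$ fibres holomorphically over a torus $T^2=\R^2/\Lambda$ with torus fibres. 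Here the vertical fields are holomorphic but not fibrewise invariant under monodromy in general. This is where we must check the precise hypothesis of the general theorem: the fields are holomorphic on $\msf G$ and only $\Gamma$-equivariant up to the monodromy. We would verify that the relevant quantity, presumably a fibrewise volume form or the Hermitian form on the vertical bundle $\pi^*$-twisted by a flat bundle, is still globally well defined, using that $\det$ of the monodromy on the complex part has modulus one because of unimodularity of $\msf G$ (which holds whenever a lattice exists).

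The main obstacle is the $C^0$ lower bound (equivalently, the upper bound on $\operatorname{tr}_{g_t}g_0$). The upper bound $g_t\le Cg_0$ should follow from a maximum-principle argument on $\operatorname{tr}_{g_0}g_t$, or on $\pi^*\omega_B$ contracted with $g_t$, using the nonpositive curvature of the flat base. For the lower bound, the strategy is as follows. First, write the flow as a parabolic equation for the potential $\partial$-$1$-form $\alpha_t$ with $\omega_t=\omega_0-t\rho^{1,1}+\partial\bar\alpha+\bar\partial\alpha$. Then use the flat vertical structure to reduce to a scalar Monge--Ampère type quantity $\log\det g_t/\det g_0$, and bound it via the flat-bundle trivialisation above. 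I expect that checking the vertical flatness/unimodularity condition in the solvable cases 2 and 3, where the torus-bundle structure is twisted, will be the delicate part.
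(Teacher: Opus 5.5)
\textbf{There is a genuine gap.} The overall shape is right: reduce to a submersion, get $C^0$ control, then invoke pluriclosed-flow regularity. But the analytic core is missing.

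\textbf{The proposed estimates do not work.} The quantity $\log(\det g_t/\det g_0)$ satisfies no closed scalar equation along pluriclosed flow. Indeed $\partial_t\log\det g_t=-\operatorname{tr}_{g_t}\rho_B^{1,1}$, and $\rho_B$ differs from the Chern--Ricci form by derivatives of the Lee form of $g_t$, which are not controlled. Your maximum-principle upper bound for $\operatorname{tr}_{g_0}g_t$ also has no mechanism behind it, since there is no scalar potential to absorb the curvature and torsion terms.

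\textbf{The paper needs neither bound.} By \cite{Str16}, a lower bound $g_t\ge Ce^{-Ct}g$ alone gives long-time existence. The paper obtains it by an \emph{iterated Schwarz lemma} against a carefully chosen background metric $g$:
\begin{itemize}
\item First, $|\partial\pi|^2_t=\operatorname{tr}_{g_t|_{\mathcal H}}g|_{\mathcal H}$ is a subsolution of the heat equation, hence bounded. This uses the Schwarz lemma for $\pi$ into a base of nonpositive bisectional curvature, together with $\mathrm{Ric}^{(2)}=(\mathrm{Ric}^B)^{1,1}+Q^1$ and $Q^1\ge 0$ for pluriclosed metrics.
\item Then this bound is fed into $(\partial_t-\Delta_{g_t})\operatorname{tr}_{g_t}g\le R^g(g_t^{-1},g_t^{-1})$. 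The right-hand side is $\le C\operatorname{tr}_{g_t}g$ when the Chern curvature of $g$ has vanishing mixed vertical/horizontal components and nonpositive vertical bisectional curvature.
\end{itemize}

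\textbf{Producing such a $g$ is where the Lie theory enters.} The condition $[\ad_X,J]|_{\f i}=0$ forces $R(V,\cdot,\cdot,\cdot)=0$ for $V\in\f i$. In cases 2 and 3 one must also choose $g$ with $g([W,\bar W]^{0,1},[Z,\bar W])=0$; this comes from \cite{Par21} in case 2 and from the shift $X'=X+JV$ in case 3. Nothing in your plan plays this role. Holomorphic trivialisations of the vertical bundle and unimodularity of the monodromy are not what the estimate requires.

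\textbf{The fibration you use in case 2 does not exist in general.} With $\f i=\f a\cap J\f a$, the quotient $\msf G/\msf I$ is the non-abelian two-dimensional group ($[e_{2n},e_1]=ae_1$ modulo $\f i$) whenever $a\neq 0$, i.e.\ the hyperbolic plane. Moreover, $\f i$ need not be $\Gamma$-rational. Inoue surfaces $S_M$ are pluriclosed almost-abelian solvmanifolds of exactly this kind. They contain no curves, so they admit no holomorphic submersion onto an elliptic curve.

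The paper never descends the fibration. It works with $\pi\colon\msf G\to\msf G/\msf I$ on the universal cover, where left translations by $\Gamma$ permute the fibres and preserve $\mathcal H$. Hence $|\partial\pi|^2_t$ is deck-invariant, and the maximum principle is run on the compact $M$.

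\textbf{Case 1 also has a flaw.} Salamon's theorem does not give $\f z\cap J\f z\neq 0$; this fails for non-nilpotent complex structures. What is actually used is that pluriclosed nilmanifolds are $2$-step with $J$-invariant centre \cite{AN22,EFV12}. After that, every left-invariant metric satisfies the required curvature conditions.
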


By definition, a complex structure on a quotient space $M = \msf G / \Gamma$ is \emph{left-invariant} if it lifts to a left-invariant complex structure on $\msf G$. 

We recall that a connected Lie group $\msf G$ is \emph{nilpotent} if it can be covered by a subgroup of unipotent matrices. 
Their compact quotients are called \emph{nilmanifolds}, and a rich subclass of them admit invariant complex structures supporting pluriclosed metrics \cite{AN22, EFV12, FPS04}. An example is the well-known Kodaira--Thurston surface. 

A Lie group is \emph{almost-abelian} if its Lie algebra has a codimension-one abelian ideal. While the algebraic description is simple, many of their compact quotients also provide a large and varied class of pluriclosed manifolds, including  \emph{Endo--Pajitnov} manifolds \cite{COS24, EP20}. Invariant pluriclosed metrics on these groups were characterized in \cite{AL19}.

Lie groups satisfying the third condition in the statement of Theorem \ref{main3} and their invariant pluriclosed metrics were studied in \cite{BF24, CZ24}.


The geometry of nilmanifolds admitting pluriclosed metrics is well understood. In view of \cite{AN22}, the universal cover $\mathsf G$ of $M$ is $2$-step nilpotent and its center is a complex submanifold of $(\msf G,\widetilde J)$, see \cite{EFV12}. Hence, $M$ can be viewed as a holomorphic principal torus bundle over a torus, see \cite{AN22,EFV12}.
Motivated by this, the question of whether the pluriclosed flow could have long-time solutions on total spaces of holomorphic principal  torus bundles over K\"ahler manifolds 
arises naturally. 
 Our  next  main result  answers this question  on some  holomorphic principal  torus bundles over nonpositively curved K\"ahler manifolds.

\begin{thm}\label{surfaces}
Let $M$ be a compact complex manifold which is the total space of a holomorphic principal  $T_{\C}^r$-bundle $\pi\colon M\to X$ over a compact K\"ahler manifold $(X,h)$ with nonpositive  holomorphic bisectional curvature. Assume that there exist connection $1$-forms $\theta_i$ satisfying
$
d\theta_i=\pi^*\omega_i\,,
$
for some $\omega_i\in \Lambda^{1,1}(X)$, such that 
$$
[\omega_i]\in H^{2}(X,\Z)\,,\quad \nabla^h\omega_i=0\,, \quad i=1, \ldots, 2r\,,
$$
where $\nabla^h$ is the Levi-Civita connection of $h$. 
Then the pluriclosed flow on $M$ has a long-time solution for every initial pluriclosed metric. 
\end{thm}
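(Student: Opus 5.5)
The strategy is to reduce Theorem~\ref{surfaces} to the general result on holomorphic submersions announced in the abstract. The pluriclosed flow has a long-time solution provided the flow stays controlled along the base and the fibres are flat, so that the relevant curvature terms have a sign. Concretely, I will equip $M$ with a reference pluriclosed metric adapted to the fibration, show that it has the required curvature properties, and then run the standard Streets-type estimates.

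\textbf{Step 1: the reference metric.} Set $\theta=\theta_1+\sqrt{-1}\,\theta_2$, and similarly for the other pairs, so that these are $(1,0)$-connection forms along the torus fibres. Define
\[
\omega_{\rm ref}=\pi^*\omega_h+\sqrt{-1}\sum_{j}\theta^j\wedge\bar\theta^j .
\]
Because $d\theta_i=\pi^*\omega_i$ with $\omega_i$ of type $(1,1)$, we have $\bar\partial\theta^j=\pi^*\omega^{j}$, and
\[
\partial\bar\partial\omega_{\rm ref}=\sum_j \pi^*(\omega^j\wedge\bar\omega^j)\cdot(\text{const}).
\]
This need not vanish. I would therefore not insist that $\omega_{\rm ref}$ be pluriclosed and use it only as a background Hermitian metric. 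The assumption $\nabla^h\omega_i=0$ gives that $\omega_{\rm ref}$ is locally homogeneous in the fibre directions, and its Chern connection is computable. In a local frame $\{\pi^*e_a,\xi_j\}$, with $\xi_j$ the fundamental vector fields, the only curvature comes from $R^h$ and from the parallel forms $\omega_i$. Parallelism kills all derivative terms, so the bisectional curvature of $\omega_{\rm ref}$ equals that of $h$ on horizontal directions plus quadratic terms in the $\omega_i$. I expect a sign to be arranged for these quadratic terms.

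\textbf{Step 2: estimates.} Write the flow in the reduced form $\omega_t=\omega_0-t\rho+\partial\bar\alpha_t+\bar\partial\alpha_t$ with $\rho$ a fixed representative of $c_1$. Along the fibres $c_1$ is trivial, and on the base it is $c_1(X)\le 0$ because $h$ has nonpositive bisectional curvature. Following Streets' argument for nonpositive bisectional curvature \cite{Str16}, I would establish three bounds:
\begin{enumerate}
\item[(i)] an upper bound on $\operatorname{tr}_{\omega_t}\omega_{\rm ref}$ via a Chern--Lu/Schwarz-lemma computation for the identity map $(M,\omega_t)\to(M,\omega_{\rm ref})$, where the sign from Step~1 is exactly what is needed;
\item[(ii)] a bound on $\log\det(\omega_t/\omega_{\rm ref})$ by the maximum principle on the evolution of the volume form, using that $\omega_{\rm ref}$ has Chern--Ricci form pulled back from $X$;
\item[(iii)] uniform equivalence of $\omega_t$ and $\omega_{\rm ref}$ on finite time intervals, after which the Evans--Krylov/Streets--Tian regularity for pluriclosed flow gives higher-order control.
\end{enumerate}
Finite-time blow-up is then excluded, giving a long-time solution.

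The main obstacle is Step~1(i): the torsion terms in the Chern--Lu inequality for a non-Kähler target. Streets handles them with the torsion-bounded condition for a Hermitian target with nonpositive bisectional curvature. Here the torsion of $\omega_{\rm ref}$ is $\pi^*\omega_i$, which is parallel. My first attempt would be to lift to the universal cover or to a local product chart, where the $T^r_\C$-action and parallelism show that $\omega_{\rm ref}$ has constant torsion and nonpositive bisectional curvature in Streets' sense. That would let me invoke \cite{Str16} almost verbatim. The integrality condition $[\omega_i]\in H^2(X,\Z)$ serves only to guarantee that such $\theta_i$ exist and that the bundle is globally defined.
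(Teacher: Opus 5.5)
Your proposal has a genuine gap at Step~1, and Step~2(i) relies on it. The sign you hope to arrange for the terms quadratic in the $\omega_i$ does not exist.

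The $(1,0)$-parts of the fundamental fields give a holomorphic frame of $\mathcal V^{1,0}\subset T^{1,0}M$ with constant $g_{\rm ref}$-inner products. So the induced metric on $\mathcal V^{1,0}$ is flat. Since curvature decreases in subbundles, you get $R(U,\bar U,V,\bar V)=|B(U)V|^2\ge 0$ for $U$ horizontal and $V$ vertical. Here $B$ is the second fundamental form of $\mathcal V^{1,0}$, i.e.\ the horizontal part of $\nabla_{\tilde Y}X_i$, built from the $\omega_i$. It is nonzero as soon as some $\omega_i\neq 0$.

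A concrete instance is the Kodaira--Thurston surface, which falls under the theorem (flat base, constant $\omega_i$). There \eqref{33} gives $R(U,\bar U,Z,\bar Z)=\sum_l|g([U,Z_{\bar l}],Z)|^2>0$ for $U$ horizontal and $Z$ vertical. So $g_{\rm ref}$ does not have nonpositive bisectional curvature, and \cite{Str16} cannot be quoted. Lifting to the universal cover or to product charts changes nothing, since curvature is pointwise.

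In the inequality $(\partial_t-\Delta_{g_t})\operatorname{tr}_{g_t}g\le R^{g}(g_t^{-1},g_t^{-1})$, the term $\sum g_t^{i\bar j}g_t^{p\bar q}R_{i\bar jp\bar q}$ with $i,j$ horizontal and $p,q$ vertical has the wrong sign. It is only bounded by $C(\operatorname{tr}_{g_t}g)(\operatorname{tr}_{g_t|_{\mathcal H}}g|_{\mathcal H})$, which is quadratic, so the maximum principle does not close. You have also misplaced the difficulty: background torsion is harmless, since that inequality \cite[Lemma 6.2]{Str16} holds for any Hermitian background. The obstruction is the sign of the mixed curvature.

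The missing idea is a second Schwarz lemma, applied to the holomorphic map $\pi\colon(M,g_t)\to(X,h)$. The base $h$ is K\"ahler with nonpositive bisectional curvature. Since $g_t$ is pluriclosed, $\mathrm{Ric}^{(2)}_{g_t}=(\mathrm{Ric}^B_{g_t})^{1,1}+Q^1_{g_t}$ with $Q^1_{g_t}\ge 0$. Combining the Chern--Lu inequality with $\partial_t\omega_t=-\rho_B^{1,1}$ then gives $(\partial_t-\Delta_{g_t})|\partial\pi|^2_t\le 0$. Hence $|\partial\pi|^2_t=\operatorname{tr}_{g_t|_{\mathcal H}}g|_{\mathcal H}$ is uniformly bounded.

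With that bound, the bad cross term becomes $\le C\operatorname{tr}_{g_t}g$, giving $\operatorname{tr}_{g_t}g\le Ce^{Ct}$. By \cite{Str16} this lower bound on $g_t$ already suffices for long-time existence, so your steps (ii)--(iii) are unnecessary.

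For this to work you must also prove the structural vanishing that removes every other unsigned term:
\begin{itemize}
\item $R(V,\cdot,\cdot,\cdot)=0$, which follows from invariance of $g_{\rm ref}$ and $J$ under the torus action;
\item $R(\cdot,\cdot,V,H)=0$, which is exactly where $\nabla^h\omega_i=0$ enters.
\end{itemize}
The purely horizontal block is then $\le\pi^*R^h\le 0$, because curvature increases in the quotient $T^{1,0}M/\mathcal V^{1,0}\cong\pi^*T^{1,0}X$.
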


At this stage, the technical assumption on the connection $1$-forms of the bundle is needed to produce a background Hermitian metric with  Chern curvature tensor satisfying  specific conditions, see Theorem \ref{main}. On the other hand, the assumption is easily satisfied when the base is a Riemann surface, and thus we prove the following result, generalizing the result of Garcia-Fernandez--Jordan--Streets  on  compact minimal non-K\"ahler surfaces, see \cite{GFJS23}:

\begin{cor}\label{mario}
Any solution of the  pluriclosed flow on a compact complex manifold which is the  total space of a holomorphic principal torus bundle over a Riemann surface of genus $g\ge 1$ is immortal.  In particular, any solution of the pluriclosed flow  on a compact minimal non-K\"ahler surface of Kodaira dimension $\kappa \geq 0$ exists for all positive times.
\end{cor}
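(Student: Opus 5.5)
The plan is to deduce Corollary \ref{mario} from Theorem \ref{surfaces} by checking its hypotheses when the base $X=\Sigma$ is a compact Riemann surface of genus $g\ge 1$.

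\emph{Choice of base metric.} By uniformization, $\Sigma$ carries a constant-curvature K\"ahler metric $h$: flat if $g=1$, hyperbolic if $g\ge 2$. In complex dimension one the holomorphic bisectional curvature is just the Gaussian curvature, so it is nonpositive.

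\emph{Choice of connection forms.} Let $\pi\colon M\to\Sigma$ be a holomorphic principal $T^r_\C$-bundle, with $T^r_\C=\C^r/\Lambda$. Choose a real basis of $\Lambda$; this splits $M$ as a principal $T^{2r}$-bundle (real torus), and each circle factor has a first Chern class $c_i\in H^2(\Sigma,\Z)$, $i=1,\dots,2r$. Since $H^2(\Sigma,\R)\cong\R$ is spanned by $[\omega_h]$, with $\omega_h$ the K\"ahler form of $h$, set
$$
\omega_i:=\frac{\langle c_i,[\Sigma]\rangle}{\mathrm{Vol}_h(\Sigma)}\,\omega_h .
$$
These forms are of type $(1,1)$, represent integral classes, and are $\nabla^h$-parallel because $\omega_h$ is. Any $2$-form on a surface is of type $(1,1)$, so there is no type condition to check. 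For the real torus bundle, take any connection $\theta_i'$ with curvature $d\theta'_i=\pi^*\eta_i$ and $[\eta_i]=[\omega_i]$. Write $\omega_i-\eta_i=df_i$ for some $1$-form $f_i$ on $\Sigma$, and put $\theta_i=\theta_i'+\pi^*f_i$. This is again a connection form, and $d\theta_i=\pi^*\omega_i$.

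\emph{Compatibility with the holomorphic structure (main obstacle).} Theorem \ref{surfaces} needs the $\theta_i$ to be connection forms compatible with the holomorphic bundle structure. In practice this means $\theta_{2k-1}+\sqrt{-1}\,\theta_{2k}$, suitably mixed by the lattice basis, should give $(1,0)$-type forms along the fibres defining the holomorphic structure. I would argue this as follows.
\begin{itemize}
\item For a holomorphic principal torus bundle, the holomorphic structure is determined by the topological bundle together with a class in $H^1(\Sigma,\mathcal O)^{r}$.
\item Changing the connection by $\pi^*$ of a $1$-form on $\Sigma$ changes the induced complex structure by $\bar\partial$-data in $H^{0,1}(\Sigma)$.
\item Modifying each $f_i$ by a closed $1$-form keeps $d\theta_i=\pi^*\omega_i$. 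By Hodge theory, harmonic $1$-forms on $\Sigma$ surject onto $H^{0,1}(\Sigma)$.
\item Hence the $f_i$ can be adjusted by harmonic forms so that the resulting complex structure on $M$ is exactly the given one.
\end{itemize}
With this done, Theorem \ref{surfaces} gives immortality of every pluriclosed flow solution on $M$.

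\emph{Surfaces.} A compact minimal non-K\"ahler surface with $\kappa\ge0$ is one of the following (by the Kodaira classification):
\begin{itemize}
\item a primary Kodaira surface, which is a principal elliptic bundle over an elliptic curve;
\item a properly elliptic non-K\"ahler surface, which up to finite unramified cover is a principal elliptic bundle over a curve of genus $\ge2$;
\item a secondary Kodaira surface, which is finitely covered by a primary one.
\end{itemize}
A pluriclosed flow solution lifts to the finite cover, where it is the unique flow from the pulled-back initial data. It therefore exists for all time upstairs, and by uniqueness it is deck-invariant and descends. This gives the second statement of the corollary.
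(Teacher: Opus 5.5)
Your proposal is correct in outline, but it handles the key step differently from the paper. That step is producing connection forms that are both adapted to the given $J$ (i.e.\ $J\theta_{2s-1}=\theta_{2s}$) and have $\nabla^h$-parallel curvature.

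\textbf{The paper's route.} It never leaves the class of $J$-adapted connections. It starts from adapted $\theta_i$ and notes that their curvatures $\alpha_i$ are real $(1,1)$-forms cohomologous to $k_i\omega_{\mathrm{KE}}$. It writes $\alpha_i=k_i\omega_{\mathrm{KE}}+dd^cf_i$ by the $dd^c$-lemma, and sets $\theta'_{2s-1}=\theta_{2s-1}-\pi^*(d^cf_{2s-1}+df_{2s})$ and $\theta'_{2s}=J\theta'_{2s-1}$. Because $\pi$ is holomorphic, the correction is basic. So adaptedness holds by construction and only the curvature changes.

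\textbf{Your route.} You fix the curvature first using arbitrary connections, which needs only de Rham theory. You then recover adaptedness through the moduli of holomorphic structures on the fixed topological bundle. This uses $H^1(\Sigma,\mathcal O(T))$, the vanishing $H^2(\Sigma,\mathcal O)=0$, and the fact that anti-holomorphic $1$-forms are closed and represent all of $H^{0,1}(\Sigma)$.

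\textbf{Comparison.} Your approach is more conceptual, and it actually produces an adapted connection rather than presupposing one exists. However, it needs more machinery and is specific to curves. The paper's $dd^c$ correction is two lines and works over any compact K\"ahler base.

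\textbf{Two small repairs.}
\begin{enumerate}
\item Adjusting by closed forms only fixes the Dolbeault class of the $(0,1)$-part. You therefore get a complex structure isomorphic to $J$ via an equivariant bundle map $\Phi$ covering $\mathrm{id}_\Sigma$, not equal to $J$. Finish by replacing $\theta_i$ with $\Phi^*\theta_i$. These are $J$-adapted connection forms with the same curvature $\pi^*\omega_i$.
\item Descending ``by deck-invariance'' needs the finite cover to be Galois. It is cleaner to argue as follows. The immortal solution upstairs agrees with the lift of $g_t$ on $[0,T)$ and is smooth on $M\times[0,T]$. So $g_t$ satisfies uniform estimates on $[0,T)$ for every finite $T$, and hence cannot stop at a finite time. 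The paper itself simply says one may work on the cover without loss of generality.
\end{enumerate}
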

 The long-time existence of the pluriclosed flow on minimal non-K\"ahler surfaces of non-negative Kodaira dimension was conjectured in \cite{Str20} together with a precise conjectural picture on the long-time behaviour of the flow. Partial results on the latter were proven in \cite{Str22} when the initial metric is assumed to be invariant under the action of the fibres.
  Higher dimensional examples to which Theorem \ref{surfaces} can be applied can be found in \cite[Example 4.3]{Swa10}.

\medskip 
We deduce  Theorem \ref{main3} and  Theorem \ref{surfaces} from a general result in the following setting:
\begin{itemize}
\item a compact Hermitian manifold $(M,g)$ with a regular covering $p \colon\widetilde M \to M$; and
\item a Hermitian submersion $\pi\colon(\widetilde M,\widetilde g) \to (X, h)$, where $\widetilde g:=p^*g$, such that any $\gamma\in {\rm Deck}(\widetilde M)$ sends fibres of $\pi$ to fibres of $\pi$. 
\end{itemize}
We further denote by $\mathcal V := \ker \mathrm{d} \pi$ the vertical bundle  and by $\mathcal H := {\mathcal V}^{\perp_{\widetilde g}}$ its orthogonal complement with respect to $\widetilde g$. Finally, we denote by $R$ the Chern curvature tensor, and define the holomorphic bisectional curvature of a metric as the holomorphic bisectional curvature of its Chern connection. With this setup, our next result is as follows.

\begin{thm}\label{main}
Assume that $h$ has nonpositive holomorphic bisectional curvature and  
    \begin{equation}\label{eqn_conditions}
    R^{\widetilde g}( V,H,\cdot,\cdot) = R^{\widetilde g}(\cdot,\cdot, V, H) = 0,\qquad R^{\widetilde g}(W_1,\overline{W_1},W_2,\overline{W_2}) \leq 0,
    \end{equation}
for every $V\in \Gamma(\mathcal V), W_i \in \Gamma(\mathcal V^{1,0})$, $i =1,2$, and $H\in \Gamma(\mathcal H)$. Then the pluriclosed flow on $M$ has a long-time solution for every initial pluriclosed metric. 
\end{thm}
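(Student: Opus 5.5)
We will use the standard reduction of the pluriclosed flow to a parabolic flow of $(1,0)$-forms, together with Streets' criterion for long-time existence (\cite{Str16}): it suffices to obtain uniform upper and lower bounds for the evolving metric $g_t$ relative to a fixed background metric on any finite time interval $[0,T)$. Higher-order estimates (Evans--Krylov type, as in \cite{Str16}) then give smooth control. As background we take the given metric $g$, lifted to $\widetilde g=p^*g$ on $\widetilde M$. All estimates will be derived on $\widetilde M$ for the lifted solution $\widetilde g_t=p^*g_t$. They descend to $M$ because $p$ is a local isometry and every quantity considered is Deck-invariant. Compactness of $M$ means that maximum-principle arguments can be run on $M$ itself.

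\textbf{Step 1: horizontal upper bound.} Consider $\operatorname{tr}_{\widetilde g_t}\pi^*h$, or equivalently $\log\operatorname{tr}_{g_t}$ of the horizontal part. Along pluriclosed flow, the evolution of a trace against a fixed metric is, up to torsion terms, the Chern-type Schwarz lemma computation (Streets' version for pluriclosed flow, \cite{Str16}). Because $h$ has nonpositive bisectional curvature, the curvature term has the favourable sign. The resulting inequality $(\partial_t-\Delta)\operatorname{tr}_{g_t}\pi^*h\le C$ then yields a bound growing at most linearly in $t$. This is the analogue of the Schwarz lemma for holomorphic maps into nonpositively curved targets, and is exactly where the hypothesis on $h$ enters.

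\textbf{Step 2: vertical upper bound.} We run the same computation with $\operatorname{tr}_{g_t}\widetilde g$, or with $\widetilde g$ restricted to $\mathcal V$. The conditions \eqref{eqn_conditions} make the mixed components $R(V,H,\cdot,\cdot)$ and $R(\cdot,\cdot,V,H)$ vanish, so the curvature of $\widetilde g$ splits into a horizontal block and a vertical block. The vertical block has nonpositive bisectional curvature on $\mathcal V^{1,0}$. In the reaction term of $(\partial_t-\Delta)\operatorname{tr}_{g_t}\widetilde g$ we decompose the $g_t$-orthonormal frame into its $\mathcal V$ and $\mathcal H$ components. The cross terms then vanish, and the remaining terms are nonpositive or controlled by Step 1. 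Combined with Step 1, this gives $\widetilde g_t\le C(T)\widetilde g$. We expect this to be the main obstacle. The torsion of $\widetilde g$ produces extra terms in the Chern--Schwarz computation, and the vertical distribution is not holomorphic-parallel, so its second fundamental form contributes further terms. If the direct computation is insufficient, we will first try combining with the potential: we apply the maximum principle to $\log\operatorname{tr}_{g_t}\widetilde g - A\phi$, where $\phi$ is the Kähler-type potential evolving as in the scalar reduction of \cite{Str16}, and use the $-A\phi$ term to absorb the bad terms.

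\textbf{Step 3: lower bound and conclusion.} The determinant evolves by $\partial_t\log\det g_t=\dots$ along the reduced flow. A lower bound on the volume ratio $\det g_t/\det g$ follows from the upper bound together with the maximum principle for the scalar potential, as in \cite{Str16}, using the standard bound $\partial_t\phi=\log(\det g_t/\det g)+\dots$. Combining the upper bound on $\operatorname{tr}$ with the lower bound on $\det$ gives the lower bound $g_t\ge c(T)g$. Streets' criterion then gives existence on $[0,\infty)$.
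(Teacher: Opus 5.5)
Your Steps 1 and 2 are the right computations, and they are essentially the paper's. The Schwarz lemma for the holomorphic submersion $\pi$ controls ${\rm tr}_{\widetilde g_t}\pi^*h=|\partial\pi|^2_t$. Precisely, for pluriclosed metrics ${\rm Ric}^{(2)}_{\widetilde g_t}=({\rm Ric}^B_{\widetilde g_t})^{1,1}+Q^1_{\widetilde g_t}$ with $Q^1_{\widetilde g_t}\ge 0$, so $(\partial_t-\Delta_{\widetilde g_t})|\partial\pi|^2_t\le 0$; the torsion terms are not merely ``up to'', they have a sign. Streets' inequality $(\partial_t-\Delta_{g_t}){\rm tr}_{g_t}g\le R^g(g_t^{-1},g_t^{-1})$ holds for any Hermitian background, so the torsion of $\widetilde g$ is no obstacle. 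Combined with the block decomposition, it gives $(\partial_t-\Delta_{g_t}){\rm tr}_{g_t}g\le C\,{\rm tr}_{g_t}g$. However, you have misread what these estimates say. A bound ${\rm tr}_{g_t}g\le C$ means $g\le C g_t$, i.e.\ a \emph{lower} bound on $g_t$, not $\widetilde g_t\le C(T)\widetilde g$. Likewise Step 1 gives $g_t\ge c\,\pi^*h$, a horizontal lower bound, not an upper bound.

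Because of this, the upper bound on $g_t$ that your stated criterion requires is never proved. Step 3, which is supposed to convert an upper bound into a lower bound, also relies on a tool that is not available. In general the pluriclosed flow does not reduce to a scalar potential $\phi$ with $\partial_t\phi=\log(\det g_t/\det g)+\dots$. The Streets--Tian reduction is to a flow of a $(1,0)$-form $\alpha$ with $\omega_t=\hat\omega_t+\bar\partial\alpha+\partial\bar\alpha$. So neither the volume-ratio lower bound of Step 3 nor the $-A\phi$ fallback in Step 2 can be run. The missing idea is the actual criterion of \cite{Str16}: on a compact manifold, a lower bound $g_t\ge c(T)g$ on a finite interval $[0,T)$ \emph{alone} implies that the solution extends past $T$. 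The upper bound and higher regularity are then derived from it, cf.\ \cite{GFJS23,JS20}. With the directions corrected, your Steps 1--2 yield ${\rm tr}_{g_t}g\le Ce^{Ct}$, i.e.\ $g_t\ge C^{-1}e^{-Ct}g$, which is exactly how the paper concludes. Step 3 should simply be discarded.
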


The proof of Theorem \ref{main} is based on an iterated Schwarz Lemma adapted to the submersion structure, see Section \ref{secproof1.1}. This generalizes the arguments of Streets--Wang in \cite{SW25} and Garcia-Fernandez--Jordan--Streets in \cite{GFJS23}.
Let $g_t$ be a solution of the pluriclosed flow on $M$. Then by \eqref{eqn_conditions}, its lift $\widetilde g_t$ to $\widetilde M$ satisfies
$$
R^{\widetilde g}(\widetilde g_t^{-1}, \widetilde g_t^{-1})\leq C({\rm tr}_{\widetilde g_t}\widetilde g)({\rm tr}_{\widetilde{g_t}|_{\mathcal H}}\widetilde g|_{\mathcal H})\,,
$$
which,   in view of \cite{Str16},  implies that 
$$
\left(\frac{\partial}{\partial t }-\Delta_{\widetilde g_t}\right){\rm tr}_{\widetilde g_t} \widetilde g\leq C({\rm tr}_{\widetilde g_t}\widetilde g)({\rm tr}_{\widetilde{g_t}|_{\mathcal H}}\widetilde g|_{\mathcal H})\,.
$$
Then the Schwarz Lemma \cite{Lu68} applied to the map $\pi$  together with the fact that $p$  sends fibres of $\pi$ to fibres of $\pi$ implies that ${\rm tr}_{\tilde{g_t}|_{\mathcal H}}\tilde g|_{\mathcal H}$ is bounded.  Using now that both $\tilde g_t$ and $\tilde g$ are lifts of metrics on $M$,   we can  deduce that ${\rm tr}_{ g_t}  g\leq C{\rm e}^{Ct}$.  The long-time existence  of the solution now follows from the higher regularity of uniformly parabolic solutions of pluriclosed flow \cite{GFJS23,JS20,Str16}.

 \medskip 
 We conclude by describing how Theorem \ref{main3} and Theorem \ref{surfaces} can be directly obtained applying Theorem \ref{main}. 
 In each case of Theorem \ref{main3}, the Lie algebra  $\mathfrak g$ of $\mathsf G$ admits an abelian ideal $\mathfrak{i}$ such that 
$$
[\ad_X, J]|_{\f i }=0\,, \mbox{ for every $X\in \g$}\,.
$$
Theorem \ref{main} is then applied to $\pi\colon \msf G\to \msf G/\msf I $, where $\msf I$ is the Lie subgroup of $\msf G$ tangent to $\f i$. 
In the nilpotent case, $\f i$ is the centre of  $\g$  and any Hermitian inner product on $\g$ induces an invariant Hermitian metric on $\msf G$ satisfying \eqref{eqn_conditions}, see Lemma \ref{leftinvcurv}. In both cases 2 and 3 of Theorem \ref{main3}, $\f i$ has codimension $2$ and a metric $g$ satisfies \eqref{eqn_conditions} whenever  
\begin{equation}\label{cond}
g([X,JX],{\rm ad}_XY+J{\rm ad}_{JX}Y)=0\,,\mbox{ for every }Y\in \f i \mbox{ and }X\in \f i^{\perp}\,. 
\end{equation}
In contrast to the nilpotent case, condition \eqref{cond} is not satisfied by every left-invariant metric, but only for a specific choice of $g$. The existence of such a metric in the almost abelian case is proved in \cite{Par21}.

Finally, we obtain Theorem \ref{surfaces} by applying Theorem \ref{main} to $\widetilde M=M$ equipped with the Hermitian metric 
$$
g=\pi^*h+\sum_{i=1}^{2r}\theta_i\otimes\theta_i\,,
$$
where $\theta_i$  are the connection $1$-forms of the fibration.  With this choice, the condition on the curvature forms of the bundle being parallel with respect to $h$  is  shown to be sufficient for $ g $ to satisfy condition \eqref{eqn_conditions} of Theorem \ref{main}.
\subsection*{Acknowledgements}
The authors would  like to thank Mario Garcia--Fernandez, Ramiro A. Lafuente and Jeffrey Streets  for useful conversations and suggestions.
  Part of the present paper was developed during the program \lq\lq  Analytic and Geometric Methods on Complex Manifolds\rq\rq \,in the mathematical research institute MATRIX in Australia. The authors would like to express their gratitude to the organizers and MATRIX staff  for providing a wonderful and fruitful workplace. 
\section{Proof of Theorem \ref{main}}\label{secproof1.1}

We provide in this section the details of the proof of Theorem \ref{main}. The idea is to use the Schwarz Lemma twice to obtain a time-dependent lower bound on the pluriclosed metrics $g_t$, as was done by Streets--Wang in \cite{SW25} for Oeljeklaus--Toma manifolds, and Garcia-Fernandez--Jordan--Streets for minimal non-K\"ahler surfaces \cite{GFJS23}.

\begin{proof}[Proof of Theorem $\ref{main}$]
Let $\{g_t\}_{t\in [0,T)}$ be a solution to the pluriclosed flow on $M^n$. We show that 
$$
{\rm tr}_{g_t}g\leq C{\rm e}^{Ct}\,,\quad \mbox{ for all }t\in [0,T)\,,
$$
where $C$ depends only on $g$ and $g_0$. In particular, $g_t \geq C e^{-Ct}g$ and from \cite{Str16} it follows that maximal-time solutions to the pluriclosed flow are defined for $t\in [0,+\infty)$. To estimate ${\rm tr}_{g_t}g$, we will use the inequality
\begin{equation}\label{eqn_SchwarzGen}
\left(\frac{\partial}{\partial t }-\Delta_{g_t}\right){\rm tr}_{g_t}g\le R^g(g_t^{-1}, g_t^{-1})\,,
\end{equation}
where $R$ is the Chern curvature of $g$, which follows from \cite[Lemma 6.2]{Str16}.

Now, consider the lift $\widetilde g_t$ of $g_t$ to $\widetilde M$. 
For $x\in \widetilde M$ we have the splitting $T_x\widetilde M=\mathcal H_x\oplus \mathcal{V}_x$ and we can fix a $\widetilde g$-unitary basis $\{Z_1, \ldots, Z_n\}$ of $T^{1,0}_x\widetilde M$ such that $\{Z_1, \ldots, Z_k\}$ is a basis of $\mathcal H_x^{1,0}$  while $\{Z_{k+1}, \ldots, Z_{n}\}$ is a  basis of  $\mathcal V_x^{1,0}$. 
By using  the first two conditions in \eqref{eqn_conditions} at $x$, we have 
\begin{align*}
R^{\widetilde g}(\widetilde g_t^{-1}, \widetilde g_t^{-1})&=\sum_{i,j,p, q=1}^n\widetilde g_t^{\, i\bar j} \widetilde g_t^{\,p\bar q}R^{\widetilde g }_{i\bar j p \bar q}\\
&=\sum_{i,j,p,q=1}^k \widetilde g_t^{\, i\bar j}\widetilde g_t^{\, p\bar q}R^{\widetilde g}_{i\bar j p \bar q} + \sum_{i,j,p,q={k+1}}^n \widetilde g_t^{\, i\bar j}\widetilde g_t^{\, p\bar q}R^{\widetilde g}_{i\bar j p \bar q}\\
&\qquad 
+\sum_{i,j=1}^k \sum_{p,q=k+1}^n \widetilde g_t^{\,i\bar j}\widetilde g_t^{\, p\bar q}\left(R^{\widetilde g}_{ i\bar j p \bar q} + R^{\widetilde g}_{p \bar q  i\bar j }\right)\,.
\end{align*}
Since $\widetilde g$ submerses onto a metric $h$ with non-positive holomorphic bisectional curvature, and curvature increases in quotient bundles (see \cite[Ch.0 \S5]{GH94}), the first term in the last equality is non-positive. Moreover, by the third assumption in \eqref{eqn_conditions}, the second term is likewise non-positive. Thus, we find that

\begin{equation}\label{curvestimat}
R^{\widetilde g}(\widetilde g_t^{-1}, \widetilde g_t^{-1}) \le C({\rm tr}_{\widetilde{g_t}|_{\mathcal V}}\widetilde g |_{\mathcal V})({\rm tr}_{\widetilde{g_t}|_{\mathcal H}}\widetilde g|_{\mathcal H})\le C({\rm tr}_{\widetilde g_t}\widetilde g)({\rm tr}_{\widetilde{g_t}|_{\mathcal H}}\widetilde g|_{\mathcal H})\,,
\end{equation} 
where $C$ is a constant depending only on the background Hermitian metric $\widetilde g$. The next step is to estimate the term ${\rm tr}_{\widetilde{g_t}|_{\mathcal H}}\widetilde g|_{\mathcal H}$. 
To do that, we first note that  ${\rm tr}_{\widetilde{g_t}|_{\mathcal H}}\widetilde g|_{\mathcal H}=|\partial \pi|^2_t$, where $\partial \pi \in \Gamma(\Lambda^{1,0}\widetilde M \otimes \pi^*T^{1,0}X)$ is the $(1,0)$-part of the differential of $\pi \colon \widetilde M \to X$ and the time-dependent Hermitian metric on $\Lambda^{1,0}\widetilde M \otimes \pi^*T^{1,0}X$ is defined by:
\[
\langle A , B \rangle_t := \langle A , B \rangle_{\widetilde g_t^{-1},\pi^* h} = \widetilde g_t^{\, i\bar j}h_{p\bar q }A_i^p\overline{B_j^q}\,, \qquad A,B \in \Gamma(\Lambda^{1,0}\widetilde M \otimes \pi^*T^{1,0}X).
\]
Indeed,  with the identification $\mathcal{H}^{1,0} \simeq \pi^*T^{1,0}X$, the map $\partial \pi$ reads as the $\widetilde g$-orthogonal projection from $T^{1,0}\widetilde M$ onto $\mathcal{H}^{1,0}$ and $|\partial \pi|_t^2=\sum_{i,j=1}^{k} \widetilde g^{\,i\bar j}_{t} \widetilde g_{i\bar j}={\rm tr}_{\widetilde g_t|_{\mathcal H}}\widetilde g|_{ \mathcal{H}}$ as claimed. We now compute the heat operator acting on $|\partial \pi|_t^2$. First, using that the holomorphic bisectional curvature of $h$ is non-positive and by applying the Schwarz Lemma for holomorphic maps between complex manifolds \cite{BS23, Lu68},  we get 
$$
\Delta_{\widetilde g_t}|\partial \pi|_{t}^2
\geq |\hat{\nabla} \partial \pi|_t^2+ \langle\partial \pi\circ {\rm Ric}_{\tilde g_t}^{(2)}, \partial \pi\rangle_t
\ge \langle\partial \pi\circ {\rm Ric}_{\widetilde g_t}^{(2)}, \partial \pi\rangle_t\,, 
$$ where
${\rm Ric}_{\widetilde g_t}^{(2)}$ is the endomorphism associated with second Chern--Ricci form of the metric $\widetilde g_t$. 
Since,  for every $t\in [0,T)$,  the metric $\widetilde g_t$ is pluriclosed, we have ${\rm Ric}_{\widetilde g_t}^{(2)}=({\rm Ric}_{\widetilde g_t}^{B})^{1,1}+Q_{\widetilde g_t}^1$, see  \cite{ST10}, where $Q_{\widetilde g_t}^1\ge 0 $.  Hence $\Delta_{\widetilde g_t}|\partial \pi|_t^2\ge \langle\partial \pi\circ ({\rm Ric}_{\widetilde g_t}^{B})^{1,1}, \partial \pi\rangle_t\,.$ Moreover, 
\begin{align*}
\frac{\partial}{\partial t} |\partial \pi|^2_t &= \frac{\partial}{\partial t}{\rm tr}_{\widetilde g_t|_{\mathcal H}}(\widetilde g_{|\mathcal{H}}) =\sum_{i,j=1}^n\sum_{s,l=1}^k\widetilde g_t^{\,s\bar j }\widetilde g_t^{\, i\bar{l}}\widetilde g_{\, s \bar l}(({\rm Ric}^B_{\widetilde g_t})^{1,1})_{i\bar{j}}\\
&= \sum_{i=1}^n\sum_{s,l=1}^k\widetilde g_t^{\, i\bar l}h_{s \bar l}(({\rm Ric}^B_{\widetilde g_t})^{1,1})_i^{s} = \langle\partial \pi\circ ({\rm Ric}_{\widetilde g_t}^{B})^{1,1}, \partial \pi\rangle_t,
\end{align*}
thus $\left(\frac{\partial}{\partial t} - \Delta_{\widetilde g_t}\right)|\partial \pi|^2_t \leq 0$.

We now show that the time varying function $|\partial \pi|^2_t \colon \widetilde M \to \R$ is invariant under the deck transformation group of $\widetilde M$, hence it descends to a function on $M$. Indeed, since any deck transformation $\gamma \in \operatorname{Deck} (\widetilde M)$ sends fibres to fibres and acts by isometries of $\widetilde g$, we have that ${\rm d}\gamma(\mathcal V) \subseteq \mathcal V$ and ${\rm d}\gamma (\mathcal H) \subseteq \mathcal H$. Since $\gamma$ is also an isometry of $\widetilde g_t$, a simple computation shows that $|\partial \pi|_t^2 \circ \gamma = |\partial \pi|^2_t$. Thus, as the covering $p$ is regular, $|\partial \pi|^2_t = p^* f_t$ for a time-varying function $f_t \colon M \to \R$ which therefore satisfies $\left(\frac{\partial}{\partial t} - \Delta_{g_t}\right)f_t\leq 0$. By compactness of $M$, $f_t$ is uniformly bounded in time by the maximum principle, hence so is $ {\rm tr}_{\widetilde{g_t}|_{\mathcal H}}\widetilde g|_{\mathcal H} =|\partial \pi|^2_t =p^*f_t$. Now \eqref{eqn_SchwarzGen}  and \eqref{curvestimat} imply that 
\[
p^*\left(\frac{\partial}{\partial t} - \Delta_{g_t}\right){\rm tr}_{g_t}g =\left(\frac{\partial}{\partial t} - \Delta_{\widetilde g_t}\right){\rm tr}_{\widetilde g_t}\widetilde g\le  R^{\widetilde g}(\widetilde g_t^{-1},\widetilde g_t^{-1}) \leq C {\rm tr}_{\widetilde g_t}\widetilde g = C p^*{\rm tr}_{g_t}g\,.
\]
Another application of the maximum principle yields the bound ${\rm tr}_{g_t}g \leq C e^{Ct}$, as required.
\end{proof}

\section{Proof of Theorem \ref{main3}}
In this section we assume that $M = \msf G / \Gamma$ is a compact quotient of a simply-connected Lie group $\msf G$ endowed with a left-invariant complex structure $J$  and let $\f g$ be the Lie algebra of $\msf G$. The goal of this section is to prove Theorem \ref{main3}. Each case will follow from the next general proposition.

\begin{prop}\label{main1}
Let $(g,J)$ be a Hermitian structure on $\g$ and  let $\f i$   be  a $J$-invariant ideal of $\f g$ such  that 
\begin{equation}\label{bracket}
[\ad_X, J]|_{\f i }=0\,, \mbox{ for every $X\in \g$\,.}
\end{equation}
Assume that the Chern curvature tensor $R$ of $g$ satisfies
\begin{eqnarray}
&& R(U , \bar V , Z,\bar W)=0\,,\label{1}\\ 
&& R(U, \bar U, W,\bar W)+|[U,\bar W]^{0,1}_{\f i}|^2_{g}\leq 0\label{2}\,,
\end{eqnarray}
for any  $Z\in\mathfrak{i}^{1,0}$, 
$U,V,W\in (\mathfrak{i}^{\perp})^{1,0}$, where the subscript $\f i$ denotes the orthogonal projection onto $\f i\otimes \C$. Then any solution of the  pluriclosed flow on $(M,J)$ exists for all positive times.
\end{prop}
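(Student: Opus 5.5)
We deduce Proposition \ref{main1} from Theorem \ref{main}, applied with $\widetilde M=\msf G$, the covering $p\colon \msf G\to M=\msf G/\Gamma$, and the projection $\pi\colon \msf G\to X:=\msf G/\msf I$, where $\msf I$ is the connected subgroup tangent to $\f i$.

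\textbf{Step 1: the setup.} Since $\f i$ is an ideal, $\msf I$ is a closed normal subgroup of the simply connected group $\msf G$, so $X$ is a simply connected Lie group. Since $\f i$ is $J$-invariant and \eqref{bracket} holds, $J$ descends to a left-invariant complex structure on $\g/\f i$. Integrability descends because the Nijenhuis tensor of the quotient structure is the projection of that of $J$, and $\pi$ is then holomorphic.

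We equip $\msf G$ with the left-invariant metric $\widetilde g$ induced by $g$. On $X$ we take the left-invariant metric $h$ induced by $g|_{\f i^\perp}$ via $\f i^\perp\cong\g/\f i$. Then $\pi$ is a Hermitian submersion with $\mathcal V$ the left translates of $\f i$ and $\mathcal H$ the left translates of $\f i^\perp$. Deck transformations are left multiplications by elements of $\Gamma$, and they preserve the left cosets $x\msf I$, which are the fibres of $\pi$. Finally, $\widetilde g=p^*$ of the induced metric on $M$.

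\textbf{Step 2: the curvature conditions \eqref{eqn_conditions}.} By left-invariance it suffices to check them on $\g$. The mixed vanishing $R(V,H,\cdot,\cdot)=R(\cdot,\cdot,V,H)=0$ is exactly \eqref{1}, read in both slot positions. By the Chern curvature symmetry $R(X,\bar Y,Z,\bar W)=\overline{R(Y,\bar X,W,\bar Z)}$, \eqref{1} also handles the conjugated configurations.

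For the vertical-vertical condition, we compute the Chern connection on $\f i$ directly. Condition \eqref{bracket} means $\ad_X$ is complex linear on $\f i$, and $\f i$ is abelian in all our cases, which I would also expect to be forced or assumed here. Then $\nabla_{\bar X}Z=[\bar X,Z]^{1,0}$, and the $(1,0)$-component of the connection restricted to $\f i$ is built from $\ad$. From this one should get that $R(W_1,\bar W_1,W_2,\bar W_2)$ is given by brackets of $\f i$-elements, so it vanishes, or at least is $\le0$. If $\f i$ is not assumed abelian, this is the point to revisit.

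\textbf{Step 3: nonpositive bisectional curvature of $h$ (the main obstacle).} By O'Neill-type formulas for Hermitian submersions, $R^h(U,\bar U,W,\bar W)$ differs from $R^{\widetilde g}(U,\bar U,W,\bar W)$ on horizontal vectors by a term involving the vertical part of brackets of horizontal fields. This is precisely the second fundamental form of $\mathcal H$ in the sense of Griffiths--Harris, since curvature increases in quotient bundles. So the plan is to compute the Chern connection of $h$ on $\f i^\perp\cong\g/\f i$ via the Koszul-type formula for the Chern connection of a left-invariant Hermitian metric, $g(\nabla_{\bar X}Y,\bar Z)=g([\bar X,Y]^{1,0},\bar Z)$ together with its conjugate. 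The goal is to verify
\[
R^h(U,\bar U,W,\bar W)=R(U,\bar U,W,\bar W)+|[U,\bar W]^{0,1}_{\f i}|^2_g,
\]
after which \eqref{2} gives nonpositivity. The delicate point is to keep track of which projection of $[U,\bar W]$ contributes. I would verify the identity by writing $\nabla^h_{\bar W}U=\pi_*(\nabla_{\bar W}U)$ and expanding $R^h$, where the difference term is $|(\nabla_{\bar W}U)_{\f i}|^2$; this equals $|[\bar W,U]^{1,0}_{\f i}|^2=|[U,\bar W]^{0,1}_{\f i}|^2$ up to conjugation conventions. Once this identity is in place, Theorem \ref{main} applies, and any solution of the pluriclosed flow on $(M,J)$ exists for all positive times.
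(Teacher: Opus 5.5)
There is a genuine gap in Step 2, which is exactly where hypothesis \eqref{bracket} has to enter. Condition \eqref{1} only controls $R(U,\bar V,Z,\bar W)$ with \emph{both} entries of the first (form) pair horizontal, i.e.\ the configuration $R(H,H,V,H)$. It says nothing about $R(V,H,\cdot,\cdot)$. Nor does it cover $R(\cdot,\cdot,V,H)$ when the first pair contains a vertical vector. The Chern curvature of a non-K\"ahler metric has no pair-exchange or Bianchi symmetry, and $\overline{R(X,\bar Y,Z,\bar W)}=R(Y,\bar X,W,\bar Z)$ only permutes entries within each pair. So ``reading \eqref{1} in both slot positions'' is not justified. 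The vertical condition $R(W_1,\bar W_1,W_2,\bar W_2)\le 0$ is not proved either. You leave it as an expectation (``one should get \dots'') resting on $\f i$ being abelian, which is not a hypothesis of the proposition, and the computation is never carried out.

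The missing idea is the paper's Lemma~\ref{lemma32}: \eqref{bracket} alone forces $R(V,\cdot,\cdot,\cdot)=0$ for every $V\in\f i$, with no abelian assumption. On left-invariant fields the Chern connection satisfies
\[
2g(\nabla_XY,Z)=g([X,Y]+J[JX,Y],Z)-g([X,Z]+J[JX,Z],Y),
\qquad
[X,Y]+J[JX,Y]=-J[\ad_Y,J]X .
\]
Hence $\nabla_VY=0$ for $V\in\f i$ and $Y$ left-invariant. Since $\f i$ is an ideal, $R(V,Y)=[\nabla_V,\nabla_Y]-\nabla_{[V,Y]}=0$. This single fact gives three things:
\begin{enumerate}
\item[(i)] $R(V,H,\cdot,\cdot)=0$;
\item[(ii)] the vertical bisectional term vanishes identically;
\item[(iii)] the part of $R(\cdot,\cdot,V,H)$ with a vertical entry in the first pair vanishes.
\end{enumerate}
Condition \eqref{1}, together with conjugation symmetry, is then needed only for the remaining configuration $R(H,H,V,H)$.

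Your Step 3 takes a different route from the paper, and the route itself is fine. The paper compares \eqref{RR} on $\g$ and on $\g/\f i$ term by term, using that $\f i$ is an ideal, and uses \eqref{bracket} to kill $g([[U,\bar U]^{1,0}_{\f i},\bar W],W)$. Your Griffiths--Harris quotient-bundle argument gives the same identity more conceptually. However, the bookkeeping needs fixing. With the first pair as the form slot, the correction term is $|(\nabla_{\bar U}W)_{\f i}|^2=|[\bar U,W]^{1,0}_{\f i}|^2$, and this equals $|[U,\bar W]^{0,1}_{\f i}|^2$ because the two vectors are complex conjugates. The equality $|[\bar W,U]^{1,0}_{\f i}|^2=|[U,\bar W]^{0,1}_{\f i}|^2$ that you wrote is false in general. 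It exchanges the roles of $U$ and $W$, and the Chern bisectional curvature is not symmetric in them.
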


Note that, if we consider $\f i=0$, then condition \eqref{1} is always satisfied, while condition \eqref{2} says that $g$ has non-positive holomorphic bisectional curvature. On the other hand, if we take $\f i=\g$, then condition \eqref{bracket} implies that $g$ is Chern-flat. In both cases Proposition \ref{main1} is obtained by applying  \cite[Theorem 1.1]{Str16}.

In order to prove Proposition \ref{main1}, we consider the following preliminary lemma giving an explicit expression of the Chern curvature of an arbitrary left-invariant metric on $\mathsf G$.  
\begin{lem}
Let $(g,J)$ be a Hermitian structure on a Lie algebra  $\g$. Then the Chern curvature tensor $R$ of $g$ takes the following expression
\begin{equation}\label{R}
R(U, \bar V, Z,\bar W)=g([U,\bar W]^{0,1},[Z,\bar V])+\sum_{l=1}^n
g([U,Z_{\bar l}],Z)g([\bar V,Z_l],\bar W)
+g([[U,\bar V]^{1,0},\bar W],Z)\,,
\end{equation}
for every $U,V,Z,W\in \g^{1,0}$, where $\{Z_1, \ldots, Z_n\}$ is a left-invariant  unitary frame with respect to $g$. In particular, 
\begin{equation}\label{RR}
R(U, \bar U, Z,\bar Z)=-|[U,\bar Z]^{0,1}|_g^2+\sum_{l=1}^n
|g([U,Z_{\bar l}],Z)|^2
+g([[U,\bar U]^{1,0},\bar Z],Z)\,.
\end{equation}
\end{lem}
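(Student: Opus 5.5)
The plan is to compute the Chern connection of a left-invariant Hermitian structure explicitly and then expand its curvature. The Chern connection is the unique Hermitian connection with $\nabla J=0$ whose torsion has vanishing $(1,1)$-part. Equivalently, for $\nabla_{\bar V}$ acting on $(1,0)$-vectors it agrees with $\bar\partial$ of the holomorphic bundle $T^{1,0}$. For left-invariant fields this gives
\[
\nabla_{\bar V} Z=[\bar V,Z]^{1,0}
\]
for $V,Z\in\g^{1,0}$; this uses integrability of $J$, so that $[\bar V,Z]^{1,0}$ defines the $\bar\partial$-operator. The $(1,0)$-direction $\nabla_U Z$ is then fixed by metric compatibility: for all $W\in\g^{1,0}$,
\[
g(\nabla_U Z,\bar W)=U g(Z,\bar W)-g(Z,\nabla_U\bar W)=-g(Z,\overline{\nabla_{\bar U}W})=-g(Z,[U,\bar W]^{0,1}).
\]
In a unitary frame this reads $\nabla_U Z=-\sum_l g(Z,[U,Z_{\bar l}]^{0,1})Z_l$. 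Here I pair the $(1,0)$ and $(0,1)$ parts via the complex-bilinear extension of $g$, and I must be careful with the convention $g(A,\bar B)$.

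Next I would evaluate
\[
R(U,\bar V,Z,\bar W)=g\bigl(\nabla_U\nabla_{\bar V}Z-\nabla_{\bar V}\nabla_U Z-\nabla_{[U,\bar V]}Z,\bar W\bigr),
\]
taking all fields left-invariant, so the connection coefficients are constant and no derivative-of-function terms appear. I would then handle the terms in turn.

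\emph{First term.} By the metric formula, $g(\nabla_U\nabla_{\bar V}Z,\bar W)=-g([\bar V,Z]^{1,0},[U,\bar W]^{0,1})$. Since the pairing only sees the $(1,0)$ component against the $(0,1)$ component, this equals $g([Z,\bar V],[U,\bar W]^{0,1})$. This produces the first summand of \eqref{R}.

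\emph{Second term.} I expand $\nabla_U Z$ in the frame $Z_l$ and apply $\nabla_{\bar V}Z_l=[\bar V,Z_l]^{1,0}$. This gives $\sum_l g(Z,[U,Z_{\bar l}])\,g([\bar V,Z_l],\bar W)$, up to sign and up to how the $(0,1)$ projections are absorbed. After sorting out signs, this is the middle summand of \eqref{R}.

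\emph{Third term.} I split $[U,\bar V]$ into its $(1,0)$ and $(0,1)$ parts. The $(0,1)$ part contributes $g([[U,\bar V]^{0,1},Z]^{1,0},\bar W)$. The $(1,0)$ part contributes, via the metric formula, $g(Z,[[U,\bar V]^{1,0},\bar W])$.

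The main obstacle is to show that these pieces combine to exactly the three displayed terms. In particular, the $[U,\bar V]^{0,1}$ contribution must either cancel or merge with the others, and I would use the Jacobi identity on $U,\bar V,Z$ together with integrability of $J$ for this. Alternatively, I would choose to compute $R$ via $\nabla_{\bar V}$ acting last, so that the Jacobi identity is not needed. Concretely, I will first try the ordering that avoids that term, using the symmetry $R(U,\bar V,Z,\bar W)=-\overline{R(V,\bar U,W,\bar Z)}$ if necessary, and then check the signs carefully on an example such as the Kodaira–Thurston algebra.

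Finally, \eqref{RR} follows by setting $V=U$ and $W=Z$. The first term becomes $g([U,\bar Z]^{0,1},[Z,\bar U])=-|[U,\bar Z]^{0,1}|^2$, since $[Z,\bar U]^{1,0}=-\overline{[U,\bar Z]^{0,1}}$. The middle term becomes $\sum_l|g([U,Z_{\bar l}],Z)|^2$, because $[\bar U,Z_l]=\overline{[U,Z_{\bar l}]}$.
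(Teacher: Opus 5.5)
Your set-up is correct and follows the same route as the paper: a left-invariant frame, $\nabla_{\bar V}Z=[\bar V,Z]^{1,0}$, and $g(\nabla_UZ,\bar W)=-g(Z,[U,\bar W]^{0,1})$ from metric compatibility. Your first two terms, together with the $[U,\bar V]^{1,0}$-part of the third, reproduce exactly the three summands of \eqref{R}.

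The gap is the step you call the main obstacle, and it cannot be closed. The leftover contribution
\[
-g\bigl(\nabla_{[U,\bar V]^{0,1}}Z,\bar W\bigr)=-g\bigl([[U,\bar V]^{0,1},Z],\bar W\bigr)
\]
is a genuine part of the tensor $R$ and is nonzero in general. (The minus sign comes from $-\nabla_{[U,\bar V]}$; you dropped it.) So neither the Jacobi identity nor a reordering of the covariant derivatives can remove it. Hermitian symmetry makes this transparent. It reads $\overline{R(U,\bar V,Z,\bar W)}=R(V,\bar U,W,\bar Z)$, with a plus sign, not the minus you wrote. This symmetry preserves the first two summands of \eqref{R} but sends the third, $g([[U,\bar V]^{1,0},\bar W],Z)$, exactly to $-g([[U,\bar V]^{0,1},Z],\bar W)$. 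Hence the correct identity is \eqref{R} plus this fourth term. In \eqref{RR} the last summand should accordingly be $2\,\mathrm{Re}\,g([[U,\bar U]^{1,0},\bar Z],Z)$.

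Here is a concrete counterexample. Take $\g=\f{aff}(\R)$ with $[e_1,e_2]=e_2$, $Je_1=e_2$, $\{e_1,e_2\}$ orthonormal, and $Z_1=\frac{1}{\sqrt2}(e_1-\sqrt{-1}e_2)$. Then $[Z_1,\bar Z_1]=\frac{1}{\sqrt2}(\bar Z_1-Z_1)$, and the metric is the hyperbolic plane. In complex dimension one Chern equals Levi-Civita, so $R(Z_1,\bar Z_1,Z_1,\bar Z_1)$ is the Gaussian curvature $-1$. The three summands of \eqref{RR}, however, give $-\frac12+\frac12-\frac12=-\frac12$; the missing term supplies the other $-\frac12$.

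The paper's own proof reaches the three-term expression by expanding $\nabla_{[Z_i,Z_{\bar j}]}Z_r$ as $\sum_l\mu^l_{i\bar j}\Gamma^s_{lr}Z_s$. That is, it omits $\sum_l\mu^{\bar l}_{i\bar j}\Gamma^s_{\bar l r}=\sum_l\mu^{\bar l}_{i\bar j}\mu^s_{\bar l r}$, which is precisely your leftover term.

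So do not try to make that term disappear. Carried out honestly, your computation proves the corrected four-term formula, not the statement as written. The extra term does vanish when $[\g^{1,0},\g^{0,1}]$ lies in a $J$-invariant centre, for example in the $2$-step nilpotent case treated later.
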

\begin{proof}
Writing $R(Z_{i},Z_{\bar j})Z_{r}=R_{i\bar{j}r}\,\!^{s }Z_s$, we have 
$$
R_{i\bar{j}r}\,\!^{s }=(\nabla_{i}\nabla_{\bar j}Z_r)^s-(\nabla_{\bar{j}}\nabla_{i}Z_{r})^s-(\nabla_{[Z_i,Z_{\bar j}]}Z_{r})^s=
\sum_{l=1}^n\left(\Gamma_{\bar j r}^l\Gamma_{il}^s-\Gamma_{ir}^l\Gamma_{\bar jl}^s-\mu_{i\bar j}^l\Gamma_{lr}^{s}\right)\,,
$$
where the $\Gamma$'s are the Christoffel symbols of the Chern connection $\nabla$ of $(g,J)$ with respect to the frame $\{Z_1, \ldots, Z_n\}$ and the $\mu$'s denote the components of the brackets.  
In order to write the components of $R$ in terms of $\mu$,  we observe that
$$
\Gamma_{ir}^l=g(\nabla_{i}Z_r,Z_{\bar l})=-g(Z_r,\nabla_{i}Z_{\bar l})=-g(Z_r,[Z_{i},Z_{\bar l}])=\mu_{\bar li}^{\bar r}
$$
and 
$$
\Gamma_{\bar jl}^s=g(\nabla_{\bar j}Z_l,Z_{\bar s})=g([Z_{\bar j},Z_l],Z_{\bar s})=\mu_{\bar jl}^s\,.
$$
It follows 
\begin{equation}
R_{i\bar{j}r}\,\!^{s }=\sum_{l=1}^n\left(\mu_{\bar j r}^l\mu_{\bar si}^{\bar l}-\mu_{\bar li}^{\bar r}\mu_{\bar jl}^s-\mu_{i\bar j}^l\mu_{\bar sl}^{\bar r}\right)
\end{equation}
which can be rewritten as 
\begin{multline*}
R(Z_i,Z_{\bar{j}},Z_r,Z_{\bar s})=g([Z_{\bar s},Z_i]^{0,1},[Z_{\bar j},Z_r])
-g([Z_{\bar l},Z_i],Z_{r})g([Z_{\bar j},Z_l],Z_{\bar s})\\
-g([Z_{\bar s},[Z_i,Z_{\bar j}]^{1,0}],Z_r)\,.
\end{multline*}
The claim now follows. 
\end{proof}

\begin{lem}\label{lemma32}
Let $(g,J)$ be a Hermitian structure on a Lie algebra $\g$ and assume that $\g$ has a $J$-invariant ideal $\mathfrak{i}$. Then condition \eqref{bracket} implies that 
\begin{equation}\label{RV}
R(V,\cdot,\cdot, \cdot)=0\,,  \mbox{ for every $V\in \f i$.}
\end{equation}

If further $(\f g, J)$ admits a pluriclosed metric $h$, then condition \eqref{RV} implies that  $h|_{\f i }$  is a K\"ahler metric on $\f i$.  
\end{lem}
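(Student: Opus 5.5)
The plan is to treat the two assertions separately. For the first, I would show that the Chern connection operators $\nabla_V$ vanish for $V\in\f i$ and then read off $R(V,\cdot)=0$ from $R(X,Y)=[\nabla_X,\nabla_Y]-\nabla_{[X,Y]}$. For the second, I would transfer \eqref{RV} from $g$ to the pluriclosed metric $h$ through the first Chern--Ricci form, and then use $\partial\bar\partial\omega_h=0$.

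\emph{Step 1: \eqref{bracket} implies \eqref{RV}.} Condition \eqref{bracket} says $\ad_X J=J\ad_X$ on $\f i$. Since $\f i$ is a $J$-invariant ideal, this gives $[X,V]\in\f i^{1,0}$ for every $X\in\g\otimes\C$ and $V\in\f i^{1,0}$. In particular $[V,\bar W]$ has no $(0,1)$-part for $W\in\g^{1,0}$.

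In the proof of the previous lemma the Christoffel symbols were computed as $\Gamma_{ir}^l=g(Z_r,[Z_i,Z_{\bar l}])$. Take $Z_i=V\in\f i^{1,0}$. Then $[V,Z_{\bar l}]$ is of type $(1,0)$, so its $g$-pairing with $Z_r$ vanishes. Hence $\nabla_V=0$ on $\g^{1,0}$. The Chern connection preserves types and commutes with conjugation, so $\nabla_V=0$ on all of $\g\otimes\C$, and $\nabla_{\bar V}=0$ as well.

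Since $\f i$ is $J$-invariant, it follows that $\nabla_V=0$ for every real $V\in\f i$. Also $[V,Y]\in\f i\otimes\C$ for all $Y$, so $\nabla_{[V,Y]}=0$. Therefore
$$R(V,Y)=\nabla_V\nabla_Y-\nabla_Y\nabla_V-\nabla_{[V,Y]}=-\nabla_Y\nabla_V=0.$$
This gives \eqref{RV}. It is a short check, with no obstacle expected.

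\emph{Step 2: \eqref{RV} implies $h|_{\f i}$ is K\"ahler.} Here the only input is \eqref{RV} for the Chern curvature of $g$; \eqref{bracket} is not used. For left-invariant metrics, $\rho_h-\rho_g=-i\partial\bar\partial\log(\det h/\det g)$ with a constant determinant ratio. So the first Chern--Ricci forms agree, $\rho_h=\rho_g=:\rho$. Since $\rho$ is a trace of $R^g$ in the last two slots, \eqref{RV} gives $\iota_V\rho=0$ for all $V\in\f i$.

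Next I would use the Lie-algebraic formula for the first Chern--Ricci form of an invariant Hermitian structure. It expresses $\rho(X,Y)$ through $\operatorname{tr}(J\ad_{[X,Y]})$ and $\operatorname{tr}\ad_{J[X,Y]}$, and it is independent of the metric. Combined with $\iota_V\rho=0$, this constrains the brackets $[\f i,\g]$.

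I would then pair this with the pluriclosed equation. Evaluate $\partial\bar\partial\omega_h$, via the Chevalley--Eilenberg differential, on quadruples $(V_1,\bar V_1,X,\bar X)$ with $V_1\in\f i^{1,0}$. I would also try the trace identity obtained by contracting $i\partial\bar\partial\omega_h$ against $\omega_h$ restricted to $\f i$, in the spirit of the standard ``pluriclosed plus $\partial\bar\partial$-exactness forces vanishing torsion'' argument. The aim is a sum of squares of the $\f i$-components of the torsion of $h$, namely $|[V_1,V_2]^{1,0}_{\f i}|^2$-type terms, that must vanish. That would give $d(\omega_h|_{\f i})=0$ on the subalgebra $\f i$, i.e.\ $h|_{\f i}$ is K\"ahler.

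\emph{Main obstacle.} The hard step is converting the metric-independent information ($\rho$ vanishing on $\f i$) plus $\partial\bar\partial\omega_h=0$ into a nonnegative quantity. The first thing I would try is an averaging/unimodularity argument: $\g$ is unimodular because it has a lattice. Under unimodularity the trace terms in $\partial\bar\partial\omega_h$ involving $\f i$ should collapse to $-\sum|T_h|_{\f i}|^2$ plus terms controlled by $\rho|_{\f i}=0$. If that identity does not close, my fallback is to work with \eqref{RV} applied directly to the explicit expression \eqref{RR} with $U\in\f i^{1,0}$. That expression gives sum-of-squares identities forcing $[\f i^{1,0},\f i^{0,1}]^{0,1}=0$ and related vanishing, which I would then feed into $\partial\bar\partial\omega_h=0$.
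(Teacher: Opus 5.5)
Your Step~1 is correct, and it is more direct than the paper's argument. The paper computes $\nabla_XY$ for $X\in\f i$, $Y\in\f i^\perp$ from the Koszul-type formula, shows $R(X,JX,Y,JY)=0$, and then identifies $\nabla$ on $\f i$ with the Chern connection of $\f i$ and invokes Boothby. You instead show $\nabla_V\equiv 0$ for $V\in\f i$ outright, and $R(V,\cdot)=0$ follows at once. One small repair is needed. Conjugation only turns $\nabla_V|_{\g^{1,0}}=0$ into $\nabla_{\bar V}|_{\g^{0,1}}=0$. To get $\nabla_V|_{\g^{0,1}}=0$ you must also check $\nabla_{\bar V}Y=[\bar V,Y]^{1,0}=0$ for $Y\in\g^{1,0}$. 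This follows from your own observation that $[\bar V,Y]\in\f i^{0,1}$.

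Step~2 has a genuine gap, and the route you chose cannot be completed. You deliberately drop \eqref{bracket} and try to get K\"ahlerness of $h|_{\f i}$ from \eqref{RV} for $g$ alone, via $\rho_h=\rho_g$, $\iota_V\rho=0$ and unimodularity. Without \eqref{bracket} the conclusion is false. Take $\g=\mathfrak e(2)\oplus\R$ with $[e_1,e_3]=e_4$, $[e_1,e_4]=-e_3$, $Je_1=e_2$, $Je_3=e_4$, and $\f i=\g$. The metric with orthonormal basis $e_1,\dots,e_4$ is flat (Milnor's criterion) and K\"ahler, so \eqref{RV} holds, and $\g$ is unimodular. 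Yet $\omega_h=e^{12}+e^{34}+c(e^{13}+e^{24})$ with $0<|c|\ll 1$ is pluriclosed, since top-degree exact forms vanish on a unimodular $4$-dimensional algebra, while $d\omega_h=-c\,e^{123}\neq 0$. Here $[\ad_{e_3},J]e_1=-e_3\neq0$, so \eqref{bracket} indeed fails, and the trace information $\iota_V\rho=0$ is far too weak to replace it.

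The missing idea is that \eqref{bracket} does not involve the metric, so your Step~1 applies verbatim to $h$: $\nabla^h_V=0$ for all $V\in\f i$. Hence the Chern connection of $(\f i,J|_{\f i},h|_{\f i})$, which is the $\f i$-part of $\nabla^h$ along $\f i$, vanishes, and $h|_{\f i}$ is Chern-flat. It is also pluriclosed, because restriction to the $J$-invariant subalgebra $\f i$ commutes with $\partial$ and $\bar\partial$. A Chern-flat pluriclosed metric is K\"ahler; this is the paper's argument. Equivalently, \eqref{bracket} with $X\in\f i$ makes $(\f i,J)$ a complex Lie algebra, so $[\f i^{1,0},\f i^{0,1}]=0$ and $\bar\partial\alpha^j=0$ for an $h$-unitary $(1,0)$-coframe $\{\alpha^j\}$. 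Then $\partial\bar\partial(\omega_h|_{\f i})$ is a nonzero multiple of $\sum_j\partial\alpha^j\wedge\overline{\partial\alpha^j}$. Wedging with $\omega_h^{m-2}$, $m=\dim_\C\f i\ge 2$, forces every $\partial\alpha^j=0$, so $\f i$ is abelian and $h|_{\f i}$ is K\"ahler.
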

\begin{proof} 
The Chern connection  of $g$ takes the following general expression 
 $$
 \begin{aligned}
2g(\nabla_XY, Z)=&\, g([X, Y], Z)+g([Z, X], Y)+g(J[JX, Y], Z)-g(J[JX, Z], Y)\\
=&\, -g((\ad_Y+J\ad_YJ)X, Z)+ g((\ad_Z+J\ad_ZJ)X, Y)\,,
\end{aligned}
$$
for  $X, Y, Z\in\f g $. Now, if $X, Z \in \f i$ and $Y, W\in \f i^{\perp}$, we have that 
\begin{equation}\label{covariantX}
g(\nabla_XY, Z)=-\frac12g(J[\ad_Y, J]X, Z)\,,  \quad g(\nabla_XY, W)=0\,,
\end{equation} concluding that $\nabla_XY=-\frac12J[\ad_Y, J]X$.  We can then compute
$$
\begin{aligned}
R(X, JX, Y, JY)=&\, g(\nabla_X\nabla_{JX}Y-\nabla_{JX}\nabla_{X}Y-\nabla_{[X, JX]}Y,JY )\\
=&\, 2g(\nabla_XY, J\nabla_{JX}Y)- g(\nabla_{[X, JX]}Y, JY)\,.
\end{aligned}
$$ 
Using the second equation in \eqref{covariantX}, we have that $g(\nabla_{[X, JX]}Y, JY)=0$. Hence, 
for any $X\in\f i$, 
$$
R(X,  JX, Y, JY)=\frac12g([\ad_Y, J]X,J[\ad_Y, J]JX )=\frac12|[\ad_Y, J]X|_g^2=0\,,
$$ 
since $[\ad_Y, J]|_{\f i}=0$, for any $Y\in \f g$.  This fact allows us to infer that, for any $X, Y\in\f i$,  $\nabla_XY\in\f i$ and $\nabla_XY=\bar \nabla_XY$, where $\bar \nabla$ is the Chern connection of  $(\f i, J|_{\f i}, g|_{\f i})$. Hence, we have that 
$$
R(X, JX, Y, JY)=\bar R(X, JX, Y, JY)\,,
 $$ where $\bar R$ is the curvature tensor of $\bar \nabla$. Thus, using that $[\ad_X, J]|_{\f i}=0$, for any $X\in \f i$, we can deduce that   $R(X, JX, Y, JY)=0$, for any $X, Y \in\f i$, see \cite{B58}, 
 giving the first part of the statement. 
 
If $(\f g, J)$ has pluriclosed metric $h$, then its restriction to $\f i$ is pluriclosed with respect to $J|_{\f i}$, and, if we assume \eqref{RV}, $h|_{\f i}$ is Chern-flat. Since any pluriclosed Chern-flat metric is K\"ahler, the claim follows. 
\end{proof}

\begin{rmk}
Note that Lemma \ref{lemma32} asserts that condition \eqref{RV} is recovered from a condition which is independent of the choice of the metric.  
\end{rmk}

\begin{proof}[Proof of Proposition $\ref{main1}$]
The claim is implied by Theorem \ref{main}. Indeed, $\msf G\to  M$ is a regular covering of $M$ and the Hermitian structure $(g,J)$ on $M$ lifts to an invariant Hermitian structure on $\msf G$, which  will   still be denoted by $(g,J)$. Then, we  can consider the subgroup $\msf I$ of $\msf G$ whose Lie algebra is $\mathfrak i$. Since $\mathfrak i $ is an ideal, $\msf I$ is a normal subgroup of $\msf G$ and the quotient $X:=\msf G/\msf I$ is a Lie group whose Lie algebra can be identified, as a vector space,  with $\f i^{\perp}$. Furthermore $X$ inherits the Hermitian structure given by the restriction of $J$ to $\f i^{\perp}$ and the metric $h:=g|_{\f i^{\perp}}$. By construction the canonical  
projection onto the quotient
$$
\pi\colon \msf G\to X
$$
is a Hermitian submersion. Next,  we show that assumptions \eqref{eqn_conditions} in Theorem \ref{main} are satisfied. In view of Lemma \ref{lemma32},  condition \eqref{bracket} forces $ R(V,\cdot,\cdot,\cdot)=0$,  for every $V\in \mathfrak i$ (here $\f i$ plays the role of the vertical bundle). Now, fix a unitary frame $\{Z_1,\ldots,  Z_n\}$ for $\f g$ with respect to $g$ such that its first $k$ vectors span $\f i^{1,0}$. Using that $\pi$ is a Hermitian submersion, $\{\pi(Z_{k+1}), \ldots, \pi(Z_n)\}$ is a unitary frame for $\f g/\f i $ with respect to $h$. Then, for any $U,W\in \f (\f i^{\perp})^{1,0}$,   by using \eqref{RR} with $\pi(U)$ and $\pi(W)$ and the fact that $\pi $ is both a Hermitian submersion and a Lie algebra homomorphism, we have 
$$
(\pi^*R^h)(U, \bar U, W,\bar W)=-|[U,\bar W]^{0,1}_{\f i^{\perp}}|_g^2+\sum_{l=k+1}^n
|g([U,Z_{\bar l}],W)|^2
+g([[U,\bar U]^{1,0}_{\f i^{\perp}},\bar W],W)\,.
$$
$$
(\pi^*R^h)(U, \bar U, W,\bar W)=R(U, \bar U, W,\bar W)+|[U,\bar W]^{0,1}_{\f i}|_g^2-\sum_{l=1}^k
|g([U,Z_{\bar l}],W)|^2
+g([[U,\bar U]^{1,0}_{\f i},\bar W],W)\,.
$$
Since $\f i$ is an ideal of $\g$, we have $g([U,Z_{\bar l}],W)=0,$ for every $l=1,\dots,k$. Finally,  condition \eqref{bracket} implies 
$$
[Y,Z]\in \f i^{1,0}\,,\quad \mbox{ for all }Z\in \f i^{1,0}, Y \in \mathfrak{i}\otimes \C\,.
$$
Then,  
$g([[U,\bar U]^{1,0}_{\f i},\bar W],W)=0$ and 
$$
(\pi^*R^h)(U, \bar U, W,\bar W)=R(U, \bar U, W,\bar W)+|[U,\bar W]^{0,1}_{\f i}|_g^2\,.
$$
Hence, condition \eqref{2} implies that $h$ has nonpositive holomorphic bisectional curvature and the claim follows from Theorem \ref{main}. 
\end{proof}

We prove Theorem \ref{main3} by studying separately the various cases. 

\subsection{Nilmanifolds}
 In this subsection, we specialize our analysis in the case in which $\msf G$ is nilpotent.  In this case, we can prove the following.  

\begin{lem}\label{leftinvcurv}
Let $(\g,J)$ be a $2$-step nilpotent Lie algebra equipped with a complex structure. Let $\f i$ be the centre of $\g$ and assume that $J\f i = \f i$. Then 
\begin{equation*}
[\ad_X, J]|_{\f i }=0\,,  \mbox{ for every }X\in \g\,.
\end{equation*}
Moreover, if $g$ is a Hermitian inner product on $\g$,  its Chern curvature satisfies 
\begin{eqnarray}
&& R(\cdot , \cdot , Z,\bar W)=0\,, \label{11}\\
&& R(U, \bar U, W,  \bar W)=-|[U,\bar W]^{0,1}|^2_g\,, \label{22}\\
&& R(U, \bar U, Z,  \bar Z)=\sum_{l=1}^n\,|g([U,Z_{\bar l}],Z)|^2\ge 0\,,\label{33}
\end{eqnarray}
for any $U, V\in \g^{1,0}$, $Z\in\f i^{1,0}$, $W\in (\f i^{\perp})^{1,0}$\,, where $\{Z_1, \ldots, Z_n\}$ is a unitary basis of $\g$ with respect to $g$. In particular, 
\begin{equation}\label{flat}
R(U, \bar U, W,\bar W)+|[U,\bar W]^{0,1}|^2_{g}=0\,,
\end{equation}
for any $U,W\in (\f i^{\perp})^{1,0}$. 
\end{lem}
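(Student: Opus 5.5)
The first claim is immediate. If $Z\in\f i$ is central, then $[\ad_X,J]Z=[X,JZ]-J[X,Z]=0$. Here $[X,Z]=0$ because $Z$ is central, and $[X,JZ]=0$ because $J\f i=\f i$ makes $JZ$ central too. All the curvature identities will be read off from formula \eqref{R} and its specialization \eqref{RR}, using two facts: 2-step nilpotency, i.e. $[\g,\g]\subseteq\f i$, and centrality of $\f i$.

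For \eqref{11}, apply \eqref{R} with $Z\in\f i^{1,0}$ in the third slot and $W\in\g^{1,0}$ arbitrary, written in the fourth slot (I read \eqref{11} as vanishing whenever either of the last two slots lies in $\f i$).
\begin{itemize}
\item The first term is $g([U,\bar W]^{0,1},[Z,\bar V])$, which vanishes since $[Z,\bar V]=0$.
\item The last term vanishes because $[[U,\bar V]^{1,0},\bar W]$ is a double bracket, hence zero in a 2-step nilpotent algebra.
\item The middle term is $\sum_l g([U,Z_{\bar l}],Z)\,g([\bar V,Z_l],\bar W)$. Since $J\f i=\f i$, the real centre is $J$-invariant and therefore $\f i\otimes\C=\f i^{1,0}\oplus\f i^{0,1}$. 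Hence $\f i^\perp$ is $J$-invariant, and $\overline{\f i^{1,0}}$ is orthogonal to $(\f i^\perp)^{1,0}$. So if $\bar W\in\overline{(\f i^\perp)^{1,0}}$, the bracket $[\bar V,Z_l]$, which lies in $\f i\otimes\C$, is orthogonal to $\bar W$ and the term vanishes.
\end{itemize}
If instead $\bar W$ has a component in $\f i^{0,1}$, I use the symmetry $R(U,\bar V,Z,\bar W)=\overline{R(V,\bar U,W,\bar Z)}$ and the first-slot statement $R(\cdot,\cdot,\cdot,\bar Z)$. The cleanest route, though, is Lemma~\ref{lemma32}, which already gives $R(V,\cdot,\cdot,\cdot)=0$ for $V\in\f i$. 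Combined with the standard Hermitian symmetries $R(X,Y,Z,W)=R(Z,W,X,Y)$ up to torsion terms, this needs checking; if it fails I fall back on the direct computation above.

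For \eqref{22}, take $W\in(\f i^\perp)^{1,0}$ in \eqref{RR}. The last term is a double bracket and so vanishes. The middle term contains $g([U,Z_{\bar l}],W)$, where $[U,Z_{\bar l}]\in\f i\otimes\C$ and $W\perp\f i$, so it is zero too. This leaves $-|[U,\bar W]^{0,1}|^2$.

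For \eqref{33}, take $Z\in\f i^{1,0}$. Now $[U,\bar Z]=0$ and the double bracket is zero, so only the middle sum survives, and it is manifestly nonnegative.

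Finally, \eqref{flat} is \eqref{22} rearranged. In Proposition~\ref{main1} only the $\f i$-component of $[U,\bar W]^{0,1}$ matters, and that is the whole bracket, since $[\g,\g]\subseteq\f i$. The main obstacle I expect is the orthogonality bookkeeping between $\f i^{1,0}$, $\f i^{0,1}$ and $(\f i^\perp)^{1,0}$ under the Hermitian extension of $g$. In particular I must make sure no cross terms survive in \eqref{11} when the fourth slot is a general vector rather than a horizontal one.
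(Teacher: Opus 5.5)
Your argument for the lemma as stated is correct and is essentially the paper's proof. You read \eqref{11}, \eqref{22} and \eqref{33} off \eqref{R} and \eqref{RR}, and kill terms using the centrality of $\f i$, the inclusion $[\g,\g]\subseteq\f i$, and the orthogonality of $\f i\otimes\C$ to $\f i^\perp\otimes\C$. Your bullets for $W\in(\f i^\perp)^{1,0}$ are exactly what \eqref{11} requires.

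You should delete the parenthetical strengthening of \eqref{11}, and the paragraph that tries to handle a fourth slot with a component in $\f i^{0,1}$. That strengthening is false, and \eqref{33} itself refutes it. Take the Kodaira--Thurston algebra: $[e_1,e_2]=e_3$, $\f i=\langle e_3,e_4\rangle$, $Je_1=e_2$, $Je_3=e_4$, the standard metric, and $Z_1=\frac{1}{\sqrt2}(e_1-\sqrt{-1}e_2)$, $Z_2=\frac{1}{\sqrt2}(e_3-\sqrt{-1}e_4)$. Then $[Z_1,\bar Z_1]=\frac{\sqrt{-1}}{\sqrt2}(Z_2+\bar Z_2)$, hence $R(Z_1,\bar Z_1,Z_2,\bar Z_2)=\frac12\neq 0$.

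For the same reason, neither of your workarounds can succeed:
\begin{enumerate}
\item The Hermitian symmetry $R(U,\bar V,Z,\bar W)=\overline{R(V,\bar U,W,\bar Z)}$ only puts two vectors of $\f i$ back into the last two slots.
\item The Chern curvature has no pair symmetry here. Lemma~\ref{lemma32} gives $R(Z_2,\bar Z_2,Z_1,\bar Z_1)=0$, while the reversed pairing equals $\frac12$. So Lemma~\ref{lemma32}, which concerns the first slot, cannot be transported to the third.
\end{enumerate}

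A smaller point concerns the third term of \eqref{R}. There $[U,\bar V]^{1,0}$ is the projection of a bracket, not a bracket. It is central only because $J\f i=\f i$ gives $\f i\otimes\C=\f i^{1,0}\oplus\f i^{0,1}$. This is where the $J$-invariance of the centre is used, as the paper notes; calling it a ``double bracket'' skips that step.
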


\begin{proof}
The condition $[\ad_X, J]|_{\f i }=0$ is trivial since we are assuming the centre $\f i$ is $J$-invariant. Moreover, using that $\f g$ is $2$-step nilpotent and $\f i$ is $J$-invariant,   for $U,V,T,S\in \g^{1,0}$, $Z\in \f i^{1,0}$ and $W\in (\f i^{\perp})^{1,0}$, we have  
\begin{eqnarray*}g([U,\bar W]^{0,1},[Z,\bar V]^{1,0})=0\,, \quad 
g([U,\bar T],Z)g([\bar V,S],\bar W)=0\,,\quad 
g([[U,\bar V]^{1,0},\bar W],Z)=0\,.
\end{eqnarray*}
 Hence,  \eqref{11} follows from  \eqref{R}.
 Furthermore, for $U \in \g^{1,0}$, $Z\in\f i^{1,0}$, $W\in (\f i^{\perp})^{1,0}$ we have 
$$
\begin{aligned}
R(U, \bar U, W,  \bar W)=&\,g([\bar W,U],[\bar U,W]^{1,0})+\sum_{l=1}^n
g([U,Z_{\bar l}],W)g([\bar U,Z_l],\bar W)
-g([\bar W,[U,\bar U]^{1,0}],W)\\
=&\,-|[U,\bar W]^{0,1}|^2_g
\end{aligned}
$$
and 
$$
\begin{aligned}
R(U, \bar U, Z,  \bar Z)=&\,g([\bar Z,U],[\bar U,Z]^{1,0})+\sum_{l=1}^n
g([U,Z_{\bar l}],Z)g([\bar U,Z_l],\bar Z)
-g([\bar Z,[U,\bar U]^{1,0}],Z)\\
=&\,\sum_{l=1}^n\,|g([U,Z_{\bar l}],Z)|^2\,,
\end{aligned}
$$
 from which the claim follows. 
\end{proof}

\begin{proof}[Proof of Theorem $\ref{main3}-1$]
Let $M= \msf G/ \Gamma$ be a nilmanifold endowed with a left-invariant complex structure and admitting a pluriclosed metric. Then $M$ has a left-invariant pluriclosed metric \cite{FG04}, the Lie algebra $\f g$ of $\msf G$ is $2$-step nilpotent \cite{AN22} and its centre $\f i$ is $J$-invariant \cite{EFV12}. Lemma \ref{leftinvcurv} implies that all the assumptions of Proposition \ref{main1} are satisfied, concluding  the proof.  
\end{proof}

\subsection{The case ${\rm codim}_\R\f i=2$} In this subsection we prove  cases $2$ and $3$ of Theorem \ref{main3}. In both cases the ideal $\f i$ has real codimension $2$ and condition \eqref{bracket} is satisfied. We consider the following general lemma.

\begin{lem}\label{codim2}
Let  $(\g,J)$ be a Lie algebra with a complex structure and let $\f i$ be a $J$-invariant ideal of $\g$ such that ${\rm codim}_\R\f i=2$ and 
\begin{equation*}
[\ad_X, J]|_{\f i }=0\,,\quad \mbox{ for all $X\in \g$}\,.
\end{equation*}
Then the Chern curvature of a Hermitian metric $g$ on $(\g,J)$ satisfies 
\begin{eqnarray}
&& R(W, \bar W, Z,\bar W)=g([W,\bar W]^{0,1},[Z,\bar W])\,,\\
&& R(W, \bar W, W,\bar W)=-|[W,\bar W]^{0,1}|_g^2\,,
\end{eqnarray}
for every $Z\in \f i^{1,0}$ and $W\in\left(\f i^{\perp}\right)^{1,0}$ with $|W|_g=1$. 
\end{lem}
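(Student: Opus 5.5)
We specialize the general formula \eqref{R} to the relevant vectors. Since ${\rm codim}_\R\f i=2$ and $\f i$ is $J$-invariant, $(\f i^\perp)^{1,0}$ is one-dimensional. We therefore choose the unitary frame $\{Z_1,\dots,Z_n\}$ with $Z_1,\dots,Z_{n-1}$ spanning $\f i^{1,0}$ and $Z_n=W$. Three structural facts will be used. First, $\f i$ is an ideal. Second, $[\ad_X,J]|_{\f i}=0$ implies that $\ad_X$ preserves $\f i^{1,0}$ and $\f i^{0,1}$ for every $X\in\g\otimes\C$. Third, since $\dim_\C(\f i^\perp)^{1,0}=1$, any bracket $[W,\bar W]$ equals $c(W-\bar W)$ modulo $\f i\otimes\C$, because $[W,\bar W]$ is imaginary-conjugate antisymmetric.

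For the second identity we apply \eqref{RR} with $U=Z=W$. The middle sum is $\sum_l|g([W,Z_{\bar l}],W)|^2$. For $l\le n-1$ we have $[W,Z_{\bar l}]\in\f i^{0,1}$, hence $g([W,Z_{\bar l}],W)=0$. For $l=n$ the term is $g([W,\bar W],W)$, which we must show vanishes. The last term is $g([[W,\bar W]^{1,0},\bar W],W)$. We decompose $[W,\bar W]^{1,0}=aW+Y$ with $Y\in\f i^{1,0}$. Then $[Y,\bar W]\in\f i^{1,0}$ by the second fact, so it pairs to zero with $W$, leaving $a\,g([W,\bar W],W)$. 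Both surviving terms are thus controlled by the single coefficient $g([W,\bar W],W)$.

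The key step, and the one I expect to be the real obstacle, is showing that $[W,\bar W]$ has no component along $W$ or $\bar W$, i.e.\ $[W,\bar W]\in\f i\otimes\C$. I plan to get this from the Lie algebra structure: $\g/\f i$ is a $2$-dimensional real Lie algebra whose bracket $[e_1,e_2]$ is determined by $[W,\bar W]$ modulo $\f i$. One route is integrability of $J$ on the quotient, but a $2$-dimensional algebra is always integrable, so that alone does not suffice. If the quotient can be non-abelian, the identity would need a different bookkeeping, so I would check whether the paper's setting (cases 2 and 3, where $\f i$ contains the nilradical or the derived algebra) guarantees $[\g,\g]\subset\f i$ and hence an abelian quotient. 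I expect $\g/\f i$ to be abelian there, because $\f i$ contains the nilradical and $[\g,\g]$ lies in the nilradical for solvable $\g$. If the lemma is meant in full generality, I would instead try to show that the $W$-components cancel: the middle term contributes $|a'|^2$ and the last term contributes $-|a'|^2$ after taking the conjugate relation $g([W,\bar W],W)=\overline{g([\bar W,W],\bar W)}$. This cancellation gives exactly $-|[W,\bar W]^{0,1}|^2$ from the first term of \eqref{R}.

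For the first identity we apply \eqref{R} with $U=V=W$ and $Z\in\f i^{1,0}$, the last slot being $\bar W$. The middle sum contains factors $g([W,Z_{\bar l}],Z)g([\bar W,Z_l],\bar W)$. For $l\le n-1$ the second factor vanishes because $[\bar W,Z_l]\in\f i^{1,0}$. For $l=n$ it is $g([W,\bar W],Z)\,g([\bar W,W],\bar W)$, which is handled as in the previous step. The last term is $g([[W,\bar W]^{1,0},\bar W],Z)$. Its $\f i$-part vanishes after pairing through the second fact only if it pairs to zero, so here I must again use the $W$-coefficient analysis, showing that this term cancels the $l=n$ term of the middle sum. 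What remains is the first term $g([W,\bar W]^{0,1},[Z,\bar W])$, as claimed.
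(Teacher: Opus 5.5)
Your fallback argument is correct and is exactly the paper's proof. The route you call the key step, however, is false and should be dropped.

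\textbf{The key step fails.} The lemma is stated, and used in case 2 of Theorem \ref{main3}, without assuming $[\g,\g]\subseteq\f i$. In the almost-abelian case, $\f i=\f a\cap J\f a$ does not contain the nilradical, since the nilradical contains $\f a$. With $W=\frac{1}{\sqrt2}(e_1-\sqrt{-1}e_{2n})$ one gets $[W,\bar W]=-\sqrt{-1}\,(ae_1+v)$. Its $\f i^\perp$-component is nonzero whenever $a\neq 0$, as happens for instance for Inoue surfaces of type $S^0$. So $[W,\bar W]\notin\f i\otimes\C$, and $\g/\f i$ is the non-abelian $2$-dimensional algebra. Your nilradical argument only covers case 3.

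\textbf{What actually happens is the cancellation.} Write $[W,\bar W]^{1,0}_{\f i^\perp}=\lambda W$. Then $g([W,\bar W],\bar W)=\lambda$, and since $\overline{[W,\bar W]}=-[W,\bar W]$, also $g([W,\bar W],W)=-\bar\lambda$. The $\f i$-part $Y$ of $[W,\bar W]^{1,0}$ drops out of the last term of \eqref{R} for a simple reason: $[Y,\bar W]\in\f i^{1,0}$, and $g$ vanishes on pairs of $(1,0)$-vectors. No further $W$-coefficient analysis is needed there. Hence the last term equals $\lambda\,g([W,\bar W],Z)$, respectively $\lambda\,g([W,\bar W],W)=-|\lambda|^2$. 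This cancels the $l=n$ term of the middle sum, which is $-\lambda\,g([W,\bar W],Z)$, respectively $|\lambda|^2$.

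\textbf{Your third structural fact is misstated and should also be discarded.} Modulo $\f i\otimes\C$ one has $[W,\bar W]\equiv\lambda W-\bar\lambda\bar W$ with $\lambda\in\C$ arbitrary, not $c(W-\bar W)$. The almost-abelian example above gives $c(W+\bar W)$ with $c=-\sqrt{-1}\,a/\sqrt2$ purely imaginary. If you used $c(W-\bar W)$ with complex $c$, the two surviving terms in $R(W,\bar W,W,\bar W)$ would sum to $|c|^2-c^2$, which is nonzero for non-real $c$. The cancellation depends precisely on the conjugation relation $g([W,\bar W],W)=\overline{g([\bar W,W],\bar W)}$ that you invoke in the fallback, so build the proof on that alone.
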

\begin{proof}
Let $W\in (\f i^{\perp})^{1,0}$ with $|W|_g=1$. Then, since $\dim ((\f i^{\perp})^{1,0}) = 1$, equation \eqref{R} implies 
$$
R(W, \bar W, Z,\bar W)=g([W,\bar W]^{0,1},[Z,\bar W])-
g([W,\bar W],Z)g([W,\bar W],\bar W)
+g([[W,\bar W]^{1,0},\bar W],Z)\,,
$$
for every $Z\in \f i^{1,0}$. Since condition \eqref{bracket} implies that $[Z,\bar W]\in \f i^{1,0}$,  for every $Z\in \f i^{1,0}$, we have 
$$
R(W, \bar W, Z,\bar W)=g([W,\bar W]^{0,1},[Z,\bar W])-
g([W,\bar W],Z)g([W,\bar W],\bar W)
+g([[W,\bar W]^{1,0}_{\f i^{\perp}},\bar W],Z)\,.
$$
Furthermore, by writing 
$$
[W,\bar W]^{1,0}_{\f i^{\perp}}=\lambda W\,, \quad \lambda\in \C
$$
we have 
$$
R(W, \bar W, Z,\bar W)=g([W,\bar W]^{0,1},[Z,\bar W])-\lambda
g([W,\bar W],Z)
+\lambda g([W,\bar W],Z)=g([W,\bar W]^{0,1},[Z,\bar W])\,.
$$
Moreover, taking into account again  that $[Z,\bar W]\in \f i^{1,0}$,  for every $Z\in \f i^{1,0}$, we have 
$$
\begin{aligned}
R(W, \bar W, W,\bar W)=&-|[W,\bar W]^{0,1}|_g^2+
|g([W,\bar W],W)|^2
+g([[W,\bar W]^{1,0}_{\f i^{\perp}},\bar W],W)\\
=& -|[W,\bar W]^{0,1}|_g^2+
|\lambda|^2
+\lambda g([W,\bar W],W)\\
=& -|[W,\bar W]^{0,1}|_g^2+
|\lambda|^2-|\lambda|^2\\
=&-|[W,\bar W]^{0,1}|_g^2\,, 
\end{aligned}
$$
as required. 
\end{proof}
We have the following direct consequence of Lemma \ref{codim2} and Lemma \ref{main1}.

\begin{prop}\label{cod2} Let $M=\msf G / \Gamma$ be the compact quotient of a Lie group by a lattice. Let $(g,J)$ be a Hermitian structure on the Lie algebra $\g$ of $\msf G$. Assume that $\g$ admits a $J$-invariant ideal $\mathfrak{i}$ such  that ${\rm codim}_\R\f i=2$,   $[\ad_X, J]|_{\f i }=0$, for all $X\in \f g$  and 
\begin{equation}\label{fabio}
 g([W,\bar W]^{0,1},[Z,\bar W])=0\,,
\end{equation}
for every $Z\in\mathfrak{i}$, where $W$ is a generator of $(\f i^{\perp})^{1,0}$ with $|W|_g=1$. Then the pluriclosed flow on  $(M,J)$ has a long-time solution for every  
initial pluriclosed metric on $(M,J)$. 
\end{prop}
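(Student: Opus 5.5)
The plan is to verify the hypotheses \eqref{1} and \eqref{2} of Proposition \ref{main1} and then apply it. The ideal $\f i$ is $J$-invariant and satisfies \eqref{bracket} by assumption, so only the two curvature conditions need checking.

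Since $\operatorname{codim}_\R\f i=2$, the space $(\f i^{\perp})^{1,0}$ is one-dimensional and spanned by the unit vector $W$. By sesquilinearity, every $U,V,W'\in(\f i^{\perp})^{1,0}$ is a scalar multiple of $W$. It therefore suffices to check the following two identities:
\begin{equation*}
R(W,\bar W,Z,\bar W)=0\ \text{ for every } Z\in\f i^{1,0},\qquad R(W,\bar W,W,\bar W)+|[W,\bar W]^{0,1}_{\f i}|_g^2\le 0 .
\end{equation*}

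For \eqref{1}, Lemma \ref{codim2} gives $R(W,\bar W,Z,\bar W)=g([W,\bar W]^{0,1},[Z,\bar W])$. This vanishes by hypothesis \eqref{fabio}, applied to the real and imaginary parts of $Z$ (or directly to $Z\in\f i^{1,0}$, by complex linearity of the expression in $Z$).

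For \eqref{2}, Lemma \ref{codim2} gives $R(W,\bar W,W,\bar W)=-|[W,\bar W]^{0,1}|_g^2$. Split $[W,\bar W]^{0,1}$ orthogonally into its $\f i\otimes\C$ component and its $\f i^{\perp}\otimes\C$ component. This split respects types, because $\f i$ is $J$-invariant and hence so is $\f i^\perp$. We obtain
\begin{equation*}
R(W,\bar W,W,\bar W)+|[W,\bar W]^{0,1}_{\f i}|_g^2=-|[W,\bar W]^{0,1}_{\f i^{\perp}}|_g^2\le 0 .
\end{equation*}
Both curvature conditions of Proposition \ref{main1} therefore hold. Note that \eqref{1} only involves $U,V,W\in(\f i^\perp)^{1,0}$ and $Z\in\f i^{1,0}$, so no further cases arise.

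Finally, Proposition \ref{main1} yields long-time existence of the pluriclosed flow on $(M,J)$ for every initial pluriclosed metric. The argument is bookkeeping once Lemma \ref{codim2} is available. The one point needing care is that the reduction to the single vector $W$ is legitimate: $R$ is linear in the first and third slots and conjugate-linear in the second and fourth, so multiplying $U,V,W$ by scalars multiplies both \eqref{1} and the expression in \eqref{2} by nonnegative or harmless factors. In particular, for $U=aW$ and $W'=bW$ the left-hand side of \eqref{2} scales by $|a|^2|b|^2$, and this factor does not affect its sign.
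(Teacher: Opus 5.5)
Your proposal is correct and follows the same route as the paper, which states Proposition \ref{cod2} as a direct consequence of Lemma \ref{codim2} and Proposition \ref{main1}. You supply the details the paper leaves implicit: the reduction to the single unit vector $W$ by sesquilinearity, the use of \eqref{fabio} to obtain \eqref{1}, and the orthogonal splitting of $[W,\bar W]^{0,1}$, which gives $-|[W,\bar W]^{0,1}_{\f i^{\perp}}|_g^2\le 0$ for \eqref{2}.
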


\medskip
We recall that a Lie algebra $\g$ is {\em almost-abelian} if it has a codimension-one abelian ideal $\f a $. If a complex structure $J$ on $\g$ is fixed, then $\f i:=\f a\cap J \f a$ is a $J$-invariant abelian ideal of $\g$ and $[\ad_X, J]|_{\f i }=0$,  for every $X\in \g$, see  \cite[Lemma 4.1]{AL19}. For every Hermitian metric on $(\g,J)$, we can choose an orthonormal basis $\{e_1, e_2, \ldots, e_{2n}\}$ of $(\f g,g)$ such that $e_{2n}\in \f a^{\perp}$, $e_1\in \f a \cap \f i^{\perp}$ and $Je_1=e_{2n}$. 
As a consequence, we can  write
$$
\ad_{e_{2n}}=\begin{pmatrix}
 a & 0 & 0 \\
 v& A & 0 \\
0&0&0
 \end{pmatrix}\,, \quad A:=\ad_{e_{2n}}|_{\f i}\in \f{gl}(\f i, J)\,, \quad a \in \R\,, \quad v \in \f i\,.
$$
and $W:=\frac{1}{\sqrt{2}}(e_1-\sqrt{-1}e_{2n})$ is a generator of $(\f i^{\perp})^{1,0}$ of unit norm.

With this set up, we are now in a position to prove the second point in Theorem \ref{main3}.

\begin{proof}[Proof of Theorem $\ref{main3}-2$]
In \cite{AL19}, it is proved that an almost-abelian Lie algebra $(\g,J)$ admits a pluriclosed inner product if and only if $A$ is a normal operator that commutes with $J$, and its eigenvalues have real part equal to 0 or $-\frac a2 $. Moreover in this case $(\f g,J)$ has a Hermitian inner product satisfying  \eqref{fabio}, see \cite[Proposition 4.2]{Par21} and point 2 of Theorem \ref{main3} follows from Proposition \ref{cod2}.
\end{proof}

We remark that Hermitian metrics on almost-abelian Lie algebras that satisfy \eqref{fabio} are \emph{locally conformally balanced}, i.e. ${\rm d} \theta=0$, where $\theta$ is the Lee form of $(g, J)$, defined by ${\rm d}\omega^{n-1}=\theta \wedge \omega^{n-1}$.

\medskip 

Finally, we consider the case where the Lie algebra $\g$ of $\msf G$ has a $J$-invariant codimension $2$ abelian  nilradical $\f i$. 

\begin{proof}[Proof of Theorem $\ref{main3}-3$]
In view of \cite{BF24}, in this hypothesis,  the existence of a pluriclosed inner product on $(\g,J)$ forces $[\ad_Y, J]|_{\f i}=0$, for any $Y\in\f g$. 
 In order to apply Proposition \ref{cod2},  we need to show the existence of a Hermitian metric $g$ on $(\g,J)$ such that $g([W,\bar W]^{0,1},[Z,\bar W])=0$, for every $Z\in \f i^{1,0}$, where $W$ is a  generator of $(\f i^\perp)^{1,0}$ with $|W|_g=1$. Writing $W=X-\sqrt{-1}JX$, with $X\in \f i^{\perp}$, the condition is equivalent to $g([X,JX],{\rm ad}_XY+J{\rm ad}_{JX}Y)=0$, for every $Y\in \f i$, i.e. 
$$
A_X^t\,[X,JX]=0\,,
$$
where $A_X:=({\rm ad}_X+J{\rm ad}_{JX})|_{\f i}$ and the  superscript $t$ denotes the transpose with respect to $g$. 

Let $h$ be a Hermitian metric on $\f g$ and fix a non-zero vector $X$ orthogonal to $\f i$ in order to split $\g$ as 
$$
\g=\langle X,JX\rangle \oplus \f i\,.
$$
Furthermore, we can decompose  $\f i={\rm Im}A_X\,\oplus\,({\rm Im}A_X)^{\perp_{h}}$. Let $A_XV$ be the component of $[X,JX]$ in ${\rm Im}A_X$ and consider $X':=X+JV$. Since $\ad_{JX}J=J\ad_{JX}$, we have 
$$
[X',JX']=[X,JX]-[X,V]-[JX,JV] =[X,JX]-A_XV\in ({\rm Im}A_X)^{\perp_{h}}=\ker A_X^t\,.
$$
Note that, since $\f i$ is abelian, we have $A_X=A_{X'}$. Finally, we define $g$ as the Hermitian metric  that coincides with   the restriction of $h$ on $\f i$, $|X'|_g=1$  and  makes  $\f i$ and $\langle X',JX'\rangle$ orthogonal. The claim follows. 
\end{proof}


\begin{rmk}\label{rmk_OT}
Another interesting  case where  Proposition \ref{main1}   can be applied is  when $\f i^\perp$ is a subalgebra of $\g$. In this case,  formula \eqref{R} implies that 
$$
R(U , \bar V , Z,\bar W)=0\,,
$$ for any 
$Z\in\mathfrak{i}^{1,0}$, 
$U,V,W\in (\mathfrak{i}^{\perp})^{1,0}$
and  conditions in Proposition \ref{main1} are equivalent to 
\begin{equation}\label{iperpSubalg}
\begin{cases}
& [\ad_X, J]|_{\f i }=0\,,\mbox{ for all }X\in \g;\\
& R(U, \bar U, W,\bar W)\leq 0\,,\mbox{ for all }U,W\in (\f i^{\perp})^{1,0}\,.
\end{cases}
\end{equation}
The condition of $\f i^\perp$ being a subalgebra is satisfied by  Oeljeklaus--Toma manifolds. In this case, $\g$ has a Hermitian metric with unitary $(1,0)$-basis $\{Z_1,\dots,Z_r,W_1,\dots,W_r\}$ satisfying the structure equations: 
$$
[W_k,\bar W_k]=\,-\frac{\sqrt{-1}}{2}(W_k+\bar W_k)\,,\quad 
[W_k,Z_i]=-\lambda_{ki}Z_i\,,\quad 
[ W_k,\bar  Z_i]=\bar \lambda_{ki} \bar Z_i\,,
$$
for $k,i=1,\dots, r$, where the $\lambda_{ki}$'s are suitable constants. Here $\f i^{1,0}=\langle Z_1, \ldots, Z_r\rangle$ and $(\f i^\perp)^{1,0}=\langle W_1, \ldots, W_r\rangle$
and the only non-zero components of the Chern curvature are $R(W_i,\bar W_i,W_i,\bar W_i)$ and $R(W_i,\bar W_i,Z_i,\bar Z_i)$. Moreover, $R(U, \bar U, W,\bar W)\leq 0$, for all $U,W \in (\f i^\perp)^{1,0}$. It follows that  conditions \eqref{iperpSubalg} are satisfied and, by applying Proposition \ref{main1},  we recover the result of Streets--Wang that solutions of the pluriclosed flow are immortal on Oeljeklaus--Toma manifolds, see \cite{SW25}. 
In the recent work \cite{FG26}, it is proved that any pluriclosed, 2-step solvable Lie algebra is the semidirect product of a negatively curved subalgebra and a Chern-Ricci flat ideal.  However, it is not clear in general if condition \eqref{bracket} holds on said ideal.

\end{rmk}

\section{Proof of Theorem \ref{surfaces}.}

The goal of this section is to prove Theorem \ref{surfaces} and Corollary \ref{mario}. Let $M$ be a compact complex manifold and $\pi \colon M \to X$ be a holomorphic principal $T_{\C}^r$-bundle over a compact  nonpositively curved K\"ahler manifold $(X, h)$. In the hypothesis of Theorem \ref{surfaces}, we show that 
the Chern curvature tensor  of  the metric
\begin{equation}\label{metricgfib}
g=\pi^*h+\sum_{i=1}^{2r}\theta_i\otimes\theta_i,
\end{equation}  
on $M$ satisfies the assumptions of Theorem \ref{main}. Note that in this case the vertical bundle $\mathcal V:=\ker {\rm d} \pi$ is pointwise spanned by the fundamental vector fields $X_i$ of the $T^r_{\mathbb C}$-action, while 
$$
\mathcal H:=\bigcap_{i=1}^{2r}\ker \theta_i\,.
$$   We will further assume that $J\theta_{2s-1}=\theta_{2s}$, for any $s=1,\ldots,r $. We should remark  here that, since the characteristic classes of $\pi$ are chosen to be of type $(1,1)$, the existence of at least a complex structure $J$ satisfying $J\theta_{2s-1}=\theta_{2s}$, for any $s=1,\ldots,r $ and making $\pi$ holomorphic is guaranteed by \cite[Lemma 1]{GGP08}. In what follows,  we will always denote with $Y, Z\in \Gamma(TX)$,  $V\in \Gamma(\mathcal V)$ and   $\tilde Z\in \Gamma(\mathcal H)$ the unique horizontal lift of $Z$. 
 
First of all, we show  that 
\begin{equation}\label{firstconditionfib}
R(V,\cdot,\cdot,\cdot)=0\,.
\end{equation}
 To do that,  we recall the following general formula for the Chern connection: for any $T, U, W\in \Gamma(TM)$, 
\begin{equation}\label{chern}
\begin{aligned}
2g(\nabla_TU, W)=&\, Tg(U, W)- (JT)g(JU, W)\\
&+g([T, U], W)+ g(J[JT, U], W)- g([T, W], U)- g(J[JT, W], U)\,.
\end{aligned}
\end{equation}
Using \eqref{chern}, and the fact that $\mathcal{L}_{X_i}g = \mathcal{L}_{X_i}J = 0$, we can infer that 
$$
g(\nabla_{X_i}\tilde Z, V)=0\,,
$$
for $i=1,\dots,2r$. Moreover 
$$
2g(\nabla_{X_{i}}\tilde Z, \tilde Y)=X_ig(\tilde Z, \tilde Y)-(JX_i)g(J \tilde Z,\tilde Y)=X_i\pi^*h(Z, Y)-(JX_i)\pi^*h(JZ, Y)=0\,.
$$  
It follows that 
\begin{equation}\label{Elia1}
\nabla_{X_{i}}\tilde Z=0\,.
\end{equation}
From this, we also find that 
$$
g(\nabla_{X_{i}}V, \tilde Y)=X_{i}g(V, \tilde Y)-g(V, \nabla_{X_{i}}\tilde Y)=0\,. 
$$ 
 Hence,  we have  that 
\begin{equation}\label{Elia2}
\nabla_{X_{i}}V\in \Gamma(\mathcal V)\,. 
\end{equation}
Since $[X_i, X_j]=0$ and  $g(X_i, X_j)=\delta_{ij}$, for every $i,j=1,\dots,2r$, formula \eqref{chern} directly implies 
$$
g(\nabla_{X_{i}}X_j, X_k)=0\,.
$$ 
 Using \eqref{Elia2} and the above formula, we obtain
\begin{equation}\label{Elia5}
\nabla_VX_i=0\,.
\end{equation}
Furthermore,  for any $s=1, \ldots, r$, we deduce that
\begin{equation}\label{Elia3}
\begin{aligned}
2g(\nabla_{\tilde Y}X_{2s-1}, \tilde Z)=&\, -g([\tilde Y, \tilde Z], X_{2s-1})-g(J[J\tilde Y, \tilde Z], X_{2s-1})\\
=&\, d\theta_{2s-1}(\tilde Y, \tilde Z)+dJ\theta_{2s-1}(J\tilde Y, \tilde Z)
=\pi^*( \omega_{2s-1}( Y,  Z)+ \omega_{2s}(J Y,  Z))
\end{aligned}
\end{equation}
and 
$$
d\pi(\nabla_{\tilde Y}X_{2s-1} )=\frac12 h^{-1}(\iota_{Y}\omega_{{2s-1}}+ \iota_{JY}\omega_{2s})\,.
$$ 
In addition, we can also conclude that 
\begin{equation}\label{Elia6}
g(\nabla_{\tilde Y}X_i, V)=0\,.
\end{equation}
Finally, we obtain

 \begin{equation}\label{Elia4}
 \begin{aligned}
 \nabla_{\tilde Y} \tilde Z=&\,  \widetilde{\nabla^{h}_{ Y} Z} +\sum_{i=1}^{2r}  g(\nabla_{\tilde Y} \tilde Z, X_i)X_i
 = \widetilde{\nabla^{h}_{ Y} Z}   -\sum_{i=1}^{2r}g(\nabla_{\tilde Y} X_{i}, \tilde Z)X_i\,.
 \end{aligned}
 \end{equation}
 Using $C^{\infty}$-linearity of $R$, in order to obtain \eqref{firstconditionfib}, it is enough to prove 
$$
R(X_i, U)W=0\,,
$$
when $W$ and $U$ are  lifts of a vector field on $X$ or  fundamental vectors of the $T^r_{\C}$-action. For such $W$ and $U$, we have 
$$
R(X_i, U)W=\nabla_{X_i}\nabla_UW\,. 
$$
In view of \eqref{Elia1} and \eqref{Elia2}, we may assume that $U=\tilde Y $  with $Y\in \Gamma(TX)$. Hence,  we need to verify that  
\begin{eqnarray}
&&  \nabla_{X_i}\nabla_{\tilde Y}\tilde Z=0\,,\label{sanji}\\ 
&&  \nabla_{X_i}\nabla_{\tilde Y} X_j=0\,,\label{doraimon}
\end{eqnarray}
for every $Z\in \Gamma(TX)$ and $i,j=1,\dots,2r$. 

Using  \eqref{Elia4} and \eqref{Elia6}, condition \eqref{sanji} is equivalent to   
$$
\nabla_{X_i}\nabla_{\tilde Y}\tilde Z=\nabla_{X_i}\left(\widetilde{\nabla^{h}_{ Y} Z}   -\sum_{j=1}^{2r}g(\nabla_{\tilde Y} X_j, \tilde Z)X_j\right)=
\nabla_{X_i}\widetilde{\nabla^{h}_{ Y} Z}   -\sum_{j=1}^{2r}g(\nabla_{\tilde Y} X_j, \tilde Z)\nabla_{X_i}X_j\,.
$$ 
Now, in view of \eqref{Elia1} and \eqref{Elia5}, we have that $\nabla_{X_i}\widetilde{\nabla^{h}_{ Y} Z} = 0$, and $\nabla_{X_i}X_j=0$, for any $i,j=1, \ldots, 2r$, forcing $\nabla_{X_i}\nabla_{\tilde Y}\tilde Z=0$. 

Finally, to prove \eqref{doraimon},  we use \eqref{Elia3} and \eqref{Elia1} to deduce  
$$
g(\nabla_{X_i}\nabla_{\tilde Y} X_j,\tilde Z)=X_ig(\nabla_{\tilde Y}X_j, \tilde Z)-g(\nabla_{\tilde Y}X_j, \nabla_{X_i}\tilde Z)=0\,,
$$
and \eqref{Elia6} and \eqref{Elia5} to obtain 
$$
g(\nabla_{X_i}\nabla_{\tilde Y} X_j,X_k)=0\,,
$$ concluding that $R(V, \cdot, \cdot, \cdot)=0$, for any $V\in \Gamma(\mathcal V)$.

We end the proof of Theorem \ref{surfaces} by showing that $R(\tilde Y, J\tilde Y, X_{2s-1}, \tilde Z)=0$, for every  $Y, Z\in \Gamma(TX)$ and $s=1, \ldots, r$. Using \eqref{Elia5} and \eqref{Elia3}, we have that
$$
g(\nabla_{[\tilde Y, J\tilde Y]}X_{2s-1}, \tilde Z)=\frac12\pi^*(h(h^{-1}(\iota_{[Y, JY]}\omega_{{2s-1}}+ \iota_{J[Y, JY]}\omega_{2s}), Z))\,,
$$ 
 while 
 $$
 \begin{aligned}
g(\nabla_{\tilde Y}\nabla_{J\tilde Y}X_{2s-1}, \tilde Z)=&\,\frac12 \pi^*(h(\nabla^h_Yh^{-1}(\iota_{JY}\omega_{2s-1}-\iota_Y\omega_{2s}), Z))\,,\\
g(\nabla_{J\tilde Y}\nabla_{\tilde Y}X_{2s-1}, \tilde Z)=&\,\frac12\pi^*(h(\nabla^h_{JY}h^{-1}(\iota_{Y}\omega_{2s-1}+\iota_{JY}\omega_{2s}), Z))\,.
\end{aligned}
 $$ 
 Now,  since $h$ is K\"ahler and the $\omega_i$'s are parallel, we infer that  
 $$
 \begin{aligned}
h&\, (\nabla^h_Yh^{-1}\iota_{JY}\omega_{2s-1}-\nabla^h_{JY}h^{-1}\iota_{Y}\omega_{2s-1}-h^{-1}(\iota_{[Y, JY]}\omega_{2s-1}), Z)\\
=&\, Y\omega_{2s-1}(JY, Z)-\omega_{2s-1}(JY, \nabla^h_YZ)-(JY)\omega_{2s-1}(Y, Z)+\omega_{2s-1}(Y, \nabla^h_{JY}Z)- \omega_{2s-1}([Y, JY], Z)\\
=&\, (\nabla^h_Y\omega_{2s-1})(JY, Z)-(\nabla^h_{JY}\omega_{2s-1})(Y, Z)=0\,.
\end{aligned}
$$ 
Analogously, 
$$
h(\nabla^h_Yh^{-1}(\iota_Y\omega_{2s})+\nabla^h_{JY}h^{-1}(\iota_{JY}\omega_{2s})+h^{-1}(\iota_{J[Y, JY]}\omega_{2s}), Z)=(\nabla^h_Y\omega_{2s})(Y, Z)+ (\nabla^h_{JY}\omega_{2s})(JY, Z)=0\,.
$$ 
Then, $R(\tilde Y, J \tilde Y, X_{2s-1}, \tilde Z)=0$. Hence, the metric $g$ defined in \eqref{metricgfib} satisfies the hypothesis of Theorem \ref{main}, concluding the proof of Theorem  \ref{surfaces}.

We conclude this section by showing Corollary \ref{mario}.

\begin{proof}[Proof of Corollary \ref{mario}]
For the first claim, let $\pi\colon M\to X$  be a holomorphic principal $T_{\C}^r$-bundle  as in the statement of Corollary \ref{mario}.  We claim there is a principal torus bundle connection on $M$ satisfying the hypotheses of Theorem \ref{surfaces}. Indeed, let  $\omega_{\textrm{KE}}$ be a non-positively curved K\"ahler--Einstein metric on $X$ scaled so that $H^2(X,\mathbb Z) = \mathbb Z[\omega_{\textrm{KE}}]$. Then, if $\theta_i \in \Lambda^1(M)$, $i=1, \ldots, 2r$, are the  principal torus bundle connections with $J\theta_{2s-1} = \theta_{2s}$, for $s=1,\ldots, r$,  their curvatures satisfy $\mathrm{d} \theta_i = \pi^* \alpha_i$, for $\alpha_i \in \Lambda^{1,1}(X)$ with $[\alpha_i] = k_i [\omega_{\textrm{KE}}], k_i \in \mathbb Z$, $i = 1,\ldots, 2r$. It follows that $\alpha_i = k_i \omega_{\textrm{KE}} + \mathrm{d}\mathrm{d}^c f_i$, with $f_i\in C^{\infty}( X ,  \mathbb R)$, $i =1,\ldots,  2r$. We may then modify the connection by defining $\theta_{2s-1}' := \theta_{2s-1} - \pi^*(\mathrm{d}^cf_{2s-1} + \mathrm{d}f_{2s})$ and $\theta_{2s}' := J \theta_{2s-1}'$, and easily verify that $\mathrm{d}\theta'_i = k_i \pi^* \omega_{\textrm{KE}}$, hence satisfying the hypothesis of Theorem \ref{surfaces}, from which the conclusion follows. 

Now, assume that $S$ is a minimal non-K\"ahler compact complex surface with $\kappa\ge 0 $.
Then $S$ can be finitely covered by a minimal non-K\"ahler properly elliptic surface $M$, see \cite[Lemma 1 and 2]{Briz94}. Thus, without loss of generality we may work on $M$, which is in particular a holomorphic  principal torus bundle over a Riemann surface $X$ with non-positive Euler characteristic. Applying the first part of Corollary \ref{mario} is sufficient to obtain the claim.
\end{proof}


 
\bibliographystyle{is-abbrv}
\bibliography{Tutto}
\end{document}